\newtheorem{thm}{Theorem}[section]
\newtheorem{prop}[thm]{Proposition}
\newtheorem{lemma}[thm]{Lemma}
\newtheorem{cor}[thm]{Corollary}
\theoremstyle{definition}
\newtheorem{remark}[thm]{Remark}
\newtheorem{exmpl}[thm]{Example}
\newcommand{\N}{\mathbb N}
\newcommand{\ito}{\hookrightarrow}
\newcommand{\q}{\mathfrak q}
\newcommand{\p}{\mathfrak p}
\newcommand{\m}{\mathfrak m}
\DeclareMathOperator{\Ext}{Ext}
\DeclareMathOperator{\Tor}{Tor}
\DeclareMathOperator{\Spec}{Spec}
\let\mc\mathcal
\begin{document}
\title{Countably generated flat modules are quite flat}

\author{Michal Hrbek}
\address{Institute of Mathematics of the Czech Academy of Sciences, \v Zitn\'a 25, 115\,67 Prague~1, Czech Republic}
\email{hrbek@math.cas.cz}
\thanks{The first author's research is supported by research plan RVO:~67985840.}

\author{Leonid Positselski}
\address{Institute of Mathematics of the Czech Academy of Sciences, \v Zitn\'a 25, 115\,67 Prague~1, Czech Republic}
\email{positselski@yandex.ru}
\thanks{The second author's research is supported by research plan RVO:~67985840.}

\author{Alexander Sl\'avik}
\address{Department of Algebra, Charles University, Faculty of Mathematics and Physics, Sokolovsk\'a 83, 186\,75 Prague 8, Czech Republic}
\email{slavik.alexander@seznam.cz}
\thanks{The third author's research is supported from the grant GA \v CR 17-23112S of the Czech Science Foundation, from the grant SVV-2017-260456 of the SVV project and from the grant UNCE/SCI/022 of the Charles University Research Centre.}

\begin{abstract}
We prove that if $R$ is a commutative Noetherian ring, then every countably generated flat $R$-module is quite flat, i.e., a direct summand of a transfinite extension of localizations of $R$ in countable multiplicative subsets. We also show that if the spectrum of $R$ is of cardinality less than $\kappa$, where $\kappa$ is an uncountable regular cardinal, then every flat $R$-module is a transfinite extension of flat modules with less than $\kappa$ generators. This provides an alternative proof of the fact that over a commutative Noetherian ring with countable spectrum, all flat modules are quite flat.
More generally, we say that a commutative ring is CFQ if every countably presented flat $R$-module is quite flat. We show that all von~Neumann regular rings and all $S$\nobreakdash-almost perfect rings are CFQ. A zero-dimensional local ring is CFQ if and only if it is perfect. A domain is CFQ if and only if all its proper quotient rings are CFQ. A valuation domain is CFQ if and only if it is strongly discrete.
\end{abstract}

\maketitle

\section{Introduction}

Over any ring, the Govorov--Lazard Theorem provides a description of flat modules
as direct limits of finitely generated free modules. However, this description, while sometimes useful, does not give much insight into the properties of flat modules; for example, for the ring of integers, the theorem says that every torsion-free abelian group is the direct limit of finitely generated free abelian groups, which is clear from the fact that finitely generated subgroups of torsion-free groups are free.
However, a more informative description of torsion-free groups is available, going back to Trlifaj \cite{Trl} with a generalization due to Bazzoni--Salce \cite{BS} (see the beginning of the introduction to \cite{PS}). So one wishes, and sometimes can have,
a more precise description of flat modules.

The descriptions of classes of modules (in particular, flat modules) that we have in mind are formulated in terms of transfinite extensions. Recall that if $\mc C$ is a class of $R$-modules, then an $R$-module $M$ is a \emph{transfinite extension} of modules from $\mc C$ if there is a well-ordered chain of submodules of $M$, $(M_\alpha \mid \alpha \leq \sigma)$, such that $M_0 = 0$, $M_\sigma = M$, $M_\alpha = \bigcup_{\beta<\alpha} M_\beta$ for every limit ordinal $\alpha\le\sigma$, and the quotient module $M_{\alpha+1}/M_{\alpha}$ is isomorphic to an element of $\mc C$ for every for every $\alpha < \sigma$. We also say that $M$ is \emph{$\mc C$-filtered} in that case.

In particular, the class of quite flat modules over a commutative ring $R$ was defined in the paper \cite{PS} as follows. We say that an $R$-module $C$ is \emph{almost cotorsion} if $\Ext_R^1(S^{-1}R, C) = 0$ for all (at most) countable multiplicative subsets $S \subseteq R$. An $R$\nobreakdash-module $F$ is said to be \emph{quite flat} if $\Ext_R^1(F, C) = 0$ for all almost cotorsion $R$\nobreakdash -modules $C$. By \cite[Corollary 6.14]{GT}, this means that quite flat modules are precisely the direct summands of transfinite extensions of modules of the form $S^{-1}R$, where $S$ is a countable multiplicative subset of $R$.

It was shown in \cite{PS} that all flat modules over a commutative Noetherian ring with a countable spectrum are quite flat.  In this paper we prove the following generalization of this result: For any commutative Noetherian ring, any countably generated flat module is quite flat. Then we offer an alternative proof of the mentioned theorem from \cite{PS}, by explaining how to deduce the description of arbitrary flat modules over a commutative Noetherian ring with countable spectrum from the description of countably generated flat modules.

To be more specific, the theorem that all countably generated flat modules over a commutative Noetherian ring are quite flat is proved in Section \ref{section-countable-is-quite-flat}.  In Section \ref{section-bounded-spectrum} we work more generally with a commutative Noetherian ring $R$ whose spectrum has cardinality smaller than $\kappa$, where $\kappa$ is a regular uncountable cardinal.  In this setting, we prove that every flat $R$-module is a transfinite extension of $<\kappa$-generated flat $R$-modules.

In Section \ref{section-cfq} we discuss (non-Noetherian) commutative rings $R$ over which all countably presented flat modules are quite flat.
We call such rings $R$ \emph{CFQ rings}.
In particular, all von Neumann regular commutative rings and all $S$\nobreakdash-almost perfect commutative rings in the sense of the paper \cite{BP} are CFQ.
A zero-dimensional local ring is CFQ if and only if it is perfect, and a one-dimensional local domain is CFQ if and only if it is almost perfect.
A domain is CFQ if and only if all its quotient rings by nonzero ideals are CFQ.
A one-dimensional CFQ domain is always locally almost perfect, but it does not need to be almost perfect.

In Section \ref{section-valuation-domains} we discuss the case of valuation domains, and prove that a valuation domain is CFQ if and only if it is strongly discrete.
In the final Section \ref{section-finitely-quite-flat}, we show that over locally perfect commutative rings all finitely generated, countably presented flat modules are quite flat.

We are grateful to Jan Trlifaj for the suggestion to include Remarks \ref{osofsky-vs-Q-remark} and \ref{countable-vN-regular}.
We also want to thank the anonymous referee for careful reading of the manuscript and several helpful suggestions on the improvement of the exposition.

\section{Noetherian rings} \label{section-countable-is-quite-flat}

In this section we prove the main result promised in the title of the paper: \emph{All countably generated flat modules over a Noetherian commutative ring are quite flat.} There are two main ingredients: Firstly, there is the ``Main Lemma'' from \cite{PS}, which makes it possible to check whether a module is quite flat by reducing the question to rings of smaller Krull dimension. We recall the statement for the convenience of the reader.

\begin{lemma}[{\cite[Main Lemma 1.18]{PS}}]\label{main-lemma}
Let $R$ be a Noetherian commutative ring and $S \subseteq R$ be a countable multiplicative subset. Then a flat $R$-module $F$ is quite flat if and only if the $R/sR$-module $F/sF$ is quite flat for all $s \in S$ and the $S^{-1}R$-module $S^{-1}F$ is quite flat.
\end{lemma}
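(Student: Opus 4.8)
The plan is to use the description from \cite[Corollary 6.14]{GT}: the quite flat $R$-modules are exactly the objects of ${}^{\perp_1}\mc C$, where $\mc C$ is the class of almost cotorsion modules, equivalently the direct summands of $\{T^{-1}R\}$-filtered modules (ranging over all countable multiplicative $T\subseteq R$). The forward implication I would treat as routine by transporting filtrations. If $F$ is a summand of a $\{T^{-1}R\}$-filtered module $G$, then since $S^{-1}(-)$ is exact and commutes with transfinite extensions, $S^{-1}G$ is filtered by $S^{-1}(T^{-1}R)=\bar T^{-1}(S^{-1}R)$, localizations of $S^{-1}R$ at the countable multiplicative sets $\bar T=$ image of $T$; hence $S^{-1}F$ is quite flat over $S^{-1}R$. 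Likewise every term of the filtration of $G$ is itself flat, so $\Tor_1^R(R/sR,-)$ vanishes on each subquotient $T^{-1}R$, the functor $(R/sR)\otimes_R-$ preserves the filtration, and $G/sG$ is filtered by $\bar T^{-1}(R/sR)$; thus $F/sF$ is quite flat over $R/sR$.

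The substance is the converse. Fix an almost cotorsion module $C$; I must show $\Ext^1_R(F,C)=0$. Enumerate a countable generating set $s_1,s_2,\dots$ of $S$, set $t_n=s_1\cdots s_n$, so that $S^{-1}R=\varinjlim_n\tfrac1{t_n}R$ and the flat module $F$ sits in an exact sequence $0\to F\to S^{-1}F\to Q\to 0$ with $Q=(S^{-1}F)/F$ an $S$-torsion module. The long exact sequence of $\Ext_R(-,C)$ gives $\Ext^1_R(S^{-1}F,C)\to\Ext^1_R(F,C)\to\Ext^2_R(Q,C)\to\Ext^2_R(S^{-1}F,C)$, so if the two outer terms vanish then $\Ext^1_R(F,C)\cong\Ext^2_R(Q,C)$ and it suffices to prove $\Ext^2_R(Q,C)=0$. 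The vanishing of $\Ext^{1}_R(S^{-1}F,C)$ and $\Ext^2_R(S^{-1}F,C)$ comes from the Cartan--Eilenberg change-of-rings spectral sequence $\Ext^p_{S^{-1}R}(S^{-1}F,\Ext^q_R(S^{-1}R,C))\Rightarrow\Ext^{p+q}_R(S^{-1}F,C)$: the module $S^{-1}R$ is countably generated and flat, hence of projective dimension $\le 1$ over $R$, while $\Ext^1_R(S^{-1}R,C)=0$ because $C$ is almost cotorsion, so only the column $q=0$ survives and $\Ext^p_R(S^{-1}F,C)\cong\Ext^p_{S^{-1}R}(S^{-1}F,\Hom_R(S^{-1}R,C))$. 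The coinduced module $\Hom_R(S^{-1}R,C)$ is almost cotorsion over $S^{-1}R$ (the same collapse gives $\Ext^1_{S^{-1}R}(V^{-1}R,\Hom_R(S^{-1}R,C))\cong\Ext^1_R(V^{-1}R,C)=0$ for countable $V\supseteq S$), and $S^{-1}F$ is quite flat, hence of projective dimension $\le1$, over $S^{-1}R$; so both Ext groups vanish.

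It remains to treat $Q$. I would filter $Q=\bigcup_n Q_n$ by the submodules $Q_n=\tfrac1{t_n}F/F$, whose successive quotients are $Q_{n+1}/Q_n=\tfrac1{t_{n+1}}F/\tfrac1{t_n}F\cong F/s_{n+1}F$ (via multiplication by $t_{n+1}$). Thus $Q$ is a length-$\omega$ transfinite extension of the modules $F/s_iF$, each quite flat over $R/s_iR$ by hypothesis. Using the Milnor ${\varprojlim}$--${\varprojlim}^1$ sequence for $Q=\varinjlim_n Q_n$ and the finite filtrations of the $Q_n$, the vanishing of $\Ext^2_R(Q,C)$ reduces to the vanishing of $\Ext^2_R(F/sF,C)$ for $s\in S$ together with a ${\varprojlim}^1$ (Mittag--Leffler) condition on the tower $\{\Ext^1_R(Q_n,C)\}$. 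For each $s$, change of rings along $R\to R/sR$ — exploiting that $F/sF$ is quite flat, hence of projective dimension $\le1$, over $R/sR$, and that the coefficient modules $\Hom_R(R/sR,C)$ and $\Ext^1_R(R/sR,C)$ are almost cotorsion over $R/sR$ — collapses the spectral sequence onto $\Ext^2_R(F/sF,C)\cong\Hom_{R/sR}\!\big(F/sF,\Ext^2_R(R/sR,C)\big)$, reducing everything to $\Ext^2_R(R/sR,C)=0$.

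The main obstacle is precisely this last point when $s$ is a zero-divisor. Then $R/sR$ need not have projective dimension $\le1$ over $R$: its first syzygy is $sR\cong R/(0:_Rs)$, so $\Ext^2_R(R/sR,C)\cong\Ext^1_R(R/(0:_Rs),C)$, and there is no evident reason for almost cotorsion-ness of $C$ to kill this. I expect to circumvent it by using the flatness of $F$ to replace reduction modulo $s$ by reduction along a regular element: localizing at the generic points of $\Spec R$ and filtering $R$ by its associated primes so that the relevant reductions of the flat module $F$ are effected by non-zero-divisors, for which the change-of-rings spectral sequence degenerates after the column $q\le1$ and the offending $\Ext^2$ term disappears. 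The accompanying ${\varprojlim}^1$-vanishing for the tower $\{\Ext^1_R(Q_n,C)\}$ is a secondary technical point to be settled in tandem, once each $\Ext^1_R(F/s_iF,C)$ has been identified through the same change-of-rings reduction.
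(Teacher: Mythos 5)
First, a point of comparison: the paper does not prove Lemma~\ref{main-lemma} at all --- it is imported verbatim from \cite[Main Lemma 1.18]{PS}, so the benchmark is the proof given there, which runs through the boundedness of the $S$-torsion in a Noetherian ring and the ``obtainability'' machinery of \cite{PS} (showing every almost cotorsion $R$-module can be built from almost cotorsion $S^{-1}R$-modules and $R/sR$-modules, with Ext-orthogonality to $F$ preserved along the way), not through a direct spectral-sequence computation. Your ``only if'' direction is fine, and so is the localization half of your converse (the collapse of the change-of-rings spectral sequence against $\Hom_R(S^{-1}R,C)$, using $\mathrm{pd}_R\,S^{-1}R\le1$ and $\mathrm{pd}_{S^{-1}R}\,S^{-1}F\le1$). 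But the converse as a whole has two genuine gaps, both rooted in $S$-torsion, and you flag only one of them. The unflagged one: your starting sequence $0\to F\to S^{-1}F\to Q\to 0$ is not left-exact in general, since for flat $F$ the kernel of $F\to S^{-1}F$ is $KF\cong K\otimes_RF$, where $K=\ker(R\to S^{-1}R)$ is the $S$-torsion ideal of $R$ --- and a Noetherian ring can have $K\neq0$ (take $R=k[x,y]/(xy)$, $S=\{1,x,x^2,\dots\}$, $F=R$). Noetherianity buys only that this torsion is \emph{bounded} (some $s_0\in S$ kills $K$); that boundedness is exactly the hypothesis made explicit in the non-Noetherian version, Lemma~\ref{main-lemma2}, and exactly the hypothesis your sketch never uses. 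With $K\neq0$, your filtration quotients become $F/(s_{n+1}F+KF)$ rather than $F/s_{n+1}F$, and passing from $\Ext^1_R(F/KF,C)$ back to $\Ext^1_R(F,C)$ costs an uncontrolled term $\Ext^1_R(KF,C)$.

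The gap you do flag --- the survival of $\Hom_{R/sR}\bigl(F/sF,\Ext^2_R(R/sR,C)\bigr)$ when $s$ is a zero-divisor --- is indeed the heart of the matter, but your proposed circumvention is not an argument: the hypothesis supplies quite flatness of $F/sF$ only for the given elements $s\in S$, and ``localizing at generic points and filtering by associated primes'' neither replaces those elements by non-zero-divisors nor transports the almost cotorsion property of $C$ along such reductions. Moreover, the $\varprojlim^1$ condition you defer is not secondary even in the torsion-free, non-zero-divisor case: the natural route to $\Ext^1_R(F/sF,C)=0$ through your collapsed spectral sequence needs the $s$-torsion submodule $\Hom_R(R/sR,C)\subseteq C$ to be almost cotorsion over $R/sR$, but it is a \emph{submodule}, not a quotient, of $C$, and almost cotorsion is not inherited by submodules. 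Concretely, for $s$ a non-zero-divisor on $V^{-1}R$ one computes $\Ext^1_R\bigl(V^{-1}R/sV^{-1}R,\,C\bigr)\cong\Hom_R(V^{-1}R,C)/s\Hom_R(V^{-1}R,C)$, which has no reason to vanish, so the five-term edge embedding of $\Ext^1_{R/sR}\bigl(V^{-1}R/sV^{-1}R,\,\Hom_R(R/sR,C)\bigr)$ into it yields nothing; hence the Mittag--Leffler property of the tower $\{\Ext^1_R(Q_n,C)\}$ is unsubstantiated. In short, your outline correctly locates the two difficulties, but overcoming them is precisely the nontrivial content of \cite[Main Lemma 1.18]{PS}, and the proposal does not supply it.
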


The second ingredient is a lemma ensuring that there is always a suitable countable multiplicative subset to be used in Lemma \ref{main-lemma}.
Before formulating the lemma, we prove a proposition, which holds even for non-Noetherian commutative rings.

\begin{prop} \label{countable-subset}
Let $R$ be a commutative ring and $F$ a countably presented flat $R$-module.
Let $T\subseteq R$ be a multiplicative subset such that $T^{-1}F$ is a projective $T^{-1}R$-module.
Then there is a countable multiplicative subset $S\subseteq T$ such that $S^{-1}F$ is a projective $S^{-1}R$-module.
\end{prop}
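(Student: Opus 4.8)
The plan is to witness the projectivity of $T^{-1}F$ by a concrete splitting into a countably generated free module, and then to observe that only countably many denominators from $T$ enter this splitting together with the relations it satisfies; adjoining exactly these denominators will produce the required countable $S$.

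First I would fix presentations. Since $F$ is flat and countably presented, the Govorov--Lazard theorem in its sequential form lets us write $F=\varinjlim_n(P_n,f_n)$ as the direct limit of a sequence $P_0\xrightarrow{f_0}P_1\xrightarrow{f_1}\cdots$ of finitely generated free $R$-modules; denote by $u_n\colon P_n\to F$ the canonical maps and by $e^{(n)}_\ell$ a basis of $P_n$. As $T^{-1}F$ is projective and countably generated, it is a direct summand of a countably generated free module $G=(T^{-1}R)^{(\omega)}$, with basis $(e_k)_{k\in\omega}$, say via a section $\sigma\colon T^{-1}F\to G$ and a retraction $\rho\colon G\to T^{-1}F$ satisfying $\rho\sigma=\mathrm{id}$.

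Next I would extract finite data and descend. Each $y_k:=\rho(e_k)$ lies in $T^{-1}F=\varinjlim_n T^{-1}P_n$, hence in the image of some $T^{-1}P_{m(k)}$, and involves only finitely many denominators. For each $n$ the map $\sigma\circ T^{-1}u_n\colon T^{-1}P_n\to G$ has finitely generated source, so its image lies in a finite free subsum of $G$ and is given by a matrix $A_n$ over $T^{-1}R$ with finitely many denominators; these matrices satisfy the compatibility relations $A_{n+1}\circ T^{-1}f_n=A_n$. Finally $\rho\sigma=\mathrm{id}$ amounts, stage by stage, to the equalities $\sum_k (A_n)_{k\ell}\,y_k=u_n(e^{(n)}_\ell)$ in $T^{-1}F$, each of which is witnessed at a finite stage of the direct limit using finitely many denominators. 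Let $S\subseteq T$ be the multiplicative subset generated by all denominators occurring in the $y_k$, in the entries of the $A_n$, in the compatibility relations, and in the finite-stage witnesses of the splitting identities; this is a countable set, so $S$ is a countable multiplicative subset of $T$. Over $S^{-1}R$ the compatible system $(A_n)$ defines a map $\sigma'\colon S^{-1}F\to (S^{-1}R)^{(\omega)}$, the elements $y_k\in S^{-1}F$ define a map $\rho'$ in the opposite direction, and by construction $\rho'\sigma'$ agrees with the identity on each $S^{-1}P_n$; since these images generate $S^{-1}F$, we conclude $\rho'\sigma'=\mathrm{id}$, exhibiting $S^{-1}F$ as a direct summand of a free $S^{-1}R$-module, hence projective.

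I expect the main obstacle to be conceptual rather than computational: the maps $\sigma$ and $\rho$ are morphisms between \emph{infinitely} generated modules and so cannot be cleared of denominators directly. The key point is that $F$ is assembled from the finitely generated pieces $P_n$, so that both these maps and the single relation $\rho\sigma=\mathrm{id}$ decompose into countably many finite equations over $T^{-1}R$, each becoming valid over $S^{-1}R$ once finitely many denominators are adjoined. The place demanding care is the bookkeeping of denominators: one must record not only those appearing in $\sigma$ and $\rho$, but also those witnessing the compatibility of the matrices $A_n$ and the finite stage at which each instance of the splitting identity holds.
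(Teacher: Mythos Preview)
Your argument is correct and rests on the same idea as the paper's proof: the data witnessing projectivity of $T^{-1}F$ involve only countably many denominators from $T$, and collecting them yields the desired countable $S$. The execution, however, differs. The paper exploits that a countably presented flat module has projective dimension at most one, presenting $F$ as the cokernel of a monomorphism $f\colon R^{(\N)}\to R^{(\N)}$ between countable-rank free modules; projectivity of $T^{-1}F$ is then equivalent to $T^{-1}f$ being split mono, i.e.\ to a single column-finite matrix equation $AB=E$ over $T^{-1}R$, and one only has to collect the denominators appearing in $B$ together with, for each of the countably many scalar entries of $AB-E$, one element of $T$ witnessing its vanishing. Your route via the sequential direct-limit presentation and an explicit direct-summand embedding into a free module is equally valid but demands more bookkeeping (compatibilities among the $A_n$, the lifts $y_k$, and the finite-stage witnesses of the identity $\rho\sigma=\mathrm{id}$ in the colimit). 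The paper's version buys economy; yours has the minor advantage of staying closer to the definition of projectivity and not invoking the projective-dimension bound.
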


\begin{proof}
It is a standard fact that countably presented flat modules have projective dimension at most one. Furthermore, by \cite[Corollary 2.23]{GT}, $F$ is the cokernel of a monomorphism between countable-rank free $R$-modules; let $f\colon R^{(\N)} \to R^{(\N)}$ be this monomorphism.
The monomorphism $T^{-1}f\colon T^{-1}R^{(\N)} \to T^{-1}R^{(\N)}$ splits by assumption; let $g \colon T^{-1}R^{(\N)} \to T^{-1}R^{(\N)}$ be a map of $T^{-1}R$-modules such that $g(T^{-1}f) = \mathrm{id}_{T^{-1}R^{(\N)}}$.

The maps $T^{-1}f$ and $g$, being maps between free modules, can be represented by column-finite matrices of countable size of elements of $T^{-1}R$ (provided we view the elements of free modules as column vectors); denote by $A$ and $B$ the corresponding matrices, respectively, and let $E$ be the identity matrix of countable size. Then $BA - E = 0$, a matrix equation which translates into countably many equations in $T^{-1}R$. Every such equation becomes a valid equation in $R$ after multiplying by an appropriate element of $T$; pick such an element for each of the equations and let $V \subseteq T$ be the set of all these elements. Further, let $D \subseteq T$ be the set of all denominators appearing in the entries of the matrix $B$.

Both $V$ and $D$ are countable sets, therefore the multiplicative subset $S \subseteq R$ generated by $V \cup D$ is countable, too. As $D \subseteq S$, the entries of $B$ are naturally elements of $S^{-1}R$ and since $V \subseteq S$, the matrix equation $BA - E = 0$ holds in $S^{-1}R$, too. Hence $B$ defines a splitting of the monomorphism $S^{-1}f\colon S^{-1}R^{(\N)} \to S^{-1}R^{(\N)}$, the cokernel of which is $S^{-1}F$, which is therefore a projective $S^{-1}R$-module.
It remains to observe that $V \cup D \subseteq T$ implies $S \subseteq T$.
\end{proof}

\begin{lemma}\label{mult-set-exists}
Let $R$ be a Noetherian commutative ring and $F$ a countably generated flat module. Then there is a countable multiplicative subset $S \subseteq R$ such that $S \cap \q = \emptyset$ for every minimal prime ideal\/ $\q$ of $R$ and $S^{-1}F$ is a projective $S^{-1}R$-module.
\end{lemma}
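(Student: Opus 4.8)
The plan is to reduce everything to Proposition \ref{countable-subset}. First I would note that since $R$ is Noetherian, the countably generated flat module $F$ is automatically countably presented: fixing a surjection $R^{(\N)}\to F$, its kernel $K$ satisfies $K=\bigcup_n(K\cap R^n)$, and each $K\cap R^n$ is a submodule of the Noetherian module $R^n$, hence finitely generated, so $K$ is countably generated. Thus Proposition \ref{countable-subset} will be applicable as soon as I exhibit \emph{some} multiplicative subset $T$, disjoint from every minimal prime, for which $T^{-1}F$ is already projective over $T^{-1}R$. The proposition then yields a countable $S\subseteq T$ with $S^{-1}F$ projective, and because $S\subseteq T$, the condition $S\cap\q=\emptyset$ for every minimal prime $\q$ is inherited from the same condition for $T$.

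The choice I would make is to take $T$ to be the complement of the union of the minimal primes. Since $R$ is Noetherian it has only finitely many minimal primes $\q_1,\dots,\q_n$, so $T=R\setminus(\q_1\cup\dots\cup\q_n)$ is the complement of a finite union of primes and hence multiplicatively closed; by construction $T\cap\q_i=\emptyset$ for each $i$.

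The key step is to identify the ring $T^{-1}R$. Its primes correspond to the primes $\p$ of $R$ with $\p\cap T=\emptyset$, i.e. $\p\subseteq\q_1\cup\dots\cup\q_n$; by prime avoidance such a $\p$ lies inside some $\q_i$, and minimality of $\q_i$ forces $\p=\q_i$. Hence $T^{-1}R$ is a Noetherian ring all of whose primes are simultaneously minimal and maximal, i.e. a zero-dimensional Noetherian ring, so it is Artinian and in particular perfect. Over a perfect ring every flat module is projective, and therefore the flat $T^{-1}R$-module $T^{-1}F$ is projective. This zero-dimensionality computation is really the only substantive point of the argument.

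With this $T$ in hand I would invoke Proposition \ref{countable-subset} to produce a countable multiplicative subset $S\subseteq T$ with $S^{-1}F$ projective over $S^{-1}R$; as observed above, $S\subseteq T$ guarantees $S\cap\q=\emptyset$ for every minimal prime $\q$, which completes the proof. The passage from the large set $T$ to a countable $S$ is precisely what Proposition \ref{countable-subset} is designed to accomplish, so once flatness over $T^{-1}R$ has been upgraded to projectivity there is no further obstacle.
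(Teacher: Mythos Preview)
Your proof is correct and follows essentially the same route as the paper's own argument: take $T=R\setminus(\q_1\cup\dots\cup\q_n)$, observe that $T^{-1}R$ is Artinian so $T^{-1}F$ is projective, then apply Proposition~\ref{countable-subset} to extract a countable $S\subseteq T$. You supply a bit more detail (the explicit argument that countably generated implies countably presented over a Noetherian ring, and the prime avoidance justification for zero-dimensionality of $T^{-1}R$), but the structure and the key steps are identical.
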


\begin{proof}
Let $\q_1, \dots, \q_k$ be the minimal prime ideals of $R$ and put $T = R \setminus (\q_1 \cup \dots \cup \q_k)$. Then $T$ is a multiplicative subset intersecting all but the minimal primes of $R$, hence $T^{-1}R$ is an Artinian ring. It follows that $T^{-1}F$ is a projective $T^{-1}R$-module.

Since $R$ is Noetherian, every countably generated module is countably presented, so, by Proposition~\ref{countable-subset}, there is a countable multiplicative subset $S\subseteq T$ such that $S^{-1}F$ is a projective $S^{-1}R$-module.  Finally, the inclusion $S\subseteq T$ implies $S \cap \q = \emptyset$ for every minimal prime $\q$ by the choice of $T$.
\end{proof}

We are now ready to prove the main result.

\begin{thm}\label{main-theorem}
Let $R$ be a Noetherian commutative ring and $F$ a countably generated flat module. Then $F$ is quite flat.
\end{thm}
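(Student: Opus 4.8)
The plan is to induct on the Krull dimension $d=\dim R$. For the base case $d=0$, a Noetherian ring of dimension zero is Artinian, and over an Artinian (indeed any perfect) ring every flat module is projective; since a free module is a transfinite extension of copies of $R=\{1\}^{-1}R$ and quite flat modules are closed under direct summands, every projective module is quite flat. Hence $F$ is quite flat when $d=0$.

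For the inductive step I assume the statement for all Noetherian rings of dimension $<d$ and take $\dim R=d$. First I would apply Lemma~\ref{mult-set-exists} to obtain a countable multiplicative subset $S\subseteq R$ that meets no minimal prime of $R$ and for which $S^{-1}F$ is projective over $S^{-1}R$. I then feed this $S$ into the Main Lemma~\ref{main-lemma}, which reduces quite flatness of $F$ to two conditions. The localization condition holds at once, because $S^{-1}F$ is projective and hence quite flat over $S^{-1}R$. For the quotient condition, fix $s\in S$: then $F/sF\cong F\otimes_R R/sR$ is a countably generated flat module over the Noetherian ring $R/sR$, so by the inductive hypothesis it will be quite flat over $R/sR$ provided $\dim R/sR<d$. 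This last inequality is where the choice of $S$ pays off: since $s$ lies in no minimal prime, every prime of $R/sR$ corresponds to a prime of $R$ that strictly contains some minimal prime, so each chain of primes in $R/sR$ prolongs downward to a strictly longer chain in $R$, giving $\dim R/sR\le d-1$. With both conditions in place, the Main Lemma yields that $F$ is quite flat.

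The step I expect to be the main obstacle is precisely this dimension bookkeeping. The entire induction rests on the quotients $R/sR$ having strictly smaller dimension, and that is exactly the role of the clause $S\cap\q=\emptyset$ for minimal primes $\q$ built into Lemma~\ref{mult-set-exists}; without avoiding the minimal primes, passing to $R/sR$ need not lower the dimension at all. A related subtlety is that the induction in the form above directly disposes of the finite-dimensional case, so to treat a Noetherian ring of infinite Krull dimension one must ensure that the recursive descent through the rings $R/sR$ terminates, for instance by first securing a reduction to finite Krull dimension. I anticipate this to be the point demanding the most additional care.
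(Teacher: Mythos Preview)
Your argument is correct and complete for Noetherian rings of finite Krull dimension, and it uses exactly the same two ingredients as the paper (Lemma~\ref{mult-set-exists} and Lemma~\ref{main-lemma}). The gap you yourself flag is genuine, however: Noetherian rings of infinite Krull dimension exist (Nagata's examples), and for such $R$ there is no way to ``first secure a reduction to finite Krull dimension'' without essentially reproving the theorem. So as written the induction does not close.

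The paper's proof is precisely the repair you are looking for, and it is worth seeing how small the change is. Instead of inducting on $\dim R$, the paper argues by contradiction via \emph{Noetherian induction}. Suppose $F=F_0$ is countably generated flat but not quite flat over $R=R_0$. Your own inductive step, run once, produces an $s_0\in S_0$ (with $s_0$ in no minimal prime of $R_0$) such that $F_1=F_0/s_0F_0$ is countably generated flat but not quite flat over $R_1=R_0/s_0R_0$. Iterating yields $s_0,s_1,s_2,\dots$ with $s_n$ avoiding the minimal primes of $R_n$. Lifting each $s_n$ to $\tilde s_n\in R$, the ideals $I_n=(\tilde s_0,\dots,\tilde s_n)$ form a strictly increasing chain in $R$ (strictness is exactly the ``$S$ avoids minimal primes'' clause you already isolated), contradicting Noetherianity. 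Thus the recursive descent you set up always terminates, without any appeal to Krull dimension; in the finite-dimensional case this recovers your argument, since a strictly increasing chain of radicals forces the dimension to drop at each step.
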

\begin{proof}
The strategy, ``Noetherian induction'', is borrowed from the proof of \cite[Theorem 1.17]{PS}. Assume that $F_0 = F$ is a countably generated flat module which is \emph{not} quite flat. By Lemma \ref{mult-set-exists}, there is a countable multiplicative subset $S_0$ not intersecting the minimal primes of $R_0 = R$ and such that $S_0^{-1}F_0$ is a projective $S_0^{-1}R_0$-module. Therefore, by Lemma \ref{main-lemma}, since $F_0$ is not quite flat, there is $s_0 \in S_0$ such that $F_0/s_0F_0$, which is a countably generated flat $R_0/s_0R_0$-module, is not a quite flat $R_0/s_0R_0$-module.

The ring $R_1 = R_0/s_0R_0$ is a Noetherian commutative ring and by Lemma \ref{mult-set-exists}, we again obtain a multiplicative subset $S_1 \subseteq R_1$ with analogous properties with respect to the ring $R_1$ and the $R_1$-module $F_1 = F_0/s_0F_0$. Similarly, Lemma \ref{main-lemma} produces an element $s_1 \in S_1$ such that $F_1/s_1F_1$ is not a quite flat $R_1/s_1R_1$-module. Repeating this procedure, we obtain an infinite sequence $s_0 \in R_0$, $s_1 \in R_1$ etc.

Denote by $\tilde s_n \in R$ any  preimage of $s_n \in R_n$ for every $n \in \N_0$ and let $I_n$ be the ideal generated by $\tilde s_0, \dots, \tilde s_n$. Since each $s_n$ is picked from $S_n$, which avoids the minimal primes of $R_n$, the chain of ideals $I_0, I_1, \dots$ is strictly increasing, which contradicts Noetherianity of $R$. We conclude that $F$ is a quite flat $R$-module.
\end{proof}

\begin{cor} \label{countable-ordinals}
Let $R$ be a Noetherian commutative ring. Then an $R$-module $F$ is a countably generated flat module if and only if $F$ is a direct summand of a transfinite extension, indexed by a countable ordinal, of $R$-modules of the form $S^{-1}R$, where $S$ ranges over countable multiplicative subsets of $R$.
\end{cor}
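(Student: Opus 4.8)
The plan is to prove the two implications separately, handling the elementary direction first and then reducing the substantive direction to Theorem~\ref{main-theorem} together with a refinement controlling the \emph{length} of the filtration. Throughout I write $\mathcal{S}$ for the set of all $R$-modules of the form $S^{-1}R$ with $S\subseteq R$ a countable multiplicative subset; note $R\in\mathcal{S}$, so every member of $\mathcal{S}$ is countably generated over $R$, and over the Noetherian ring $R$ it is therefore also countably (i.e.\ $<\aleph_1$-)presented.

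For the ``if'' direction, suppose $F$ is a direct summand of a module $M$ that is $\mathcal{S}$-filtered by a chain indexed by a countable ordinal $\sigma$. Each step $S_\alpha^{-1}R$ is flat, and flat modules are closed under transfinite extensions and under direct summands, so $F$ is flat. For countable generation, each step $S_\alpha^{-1}R$ is generated over $R$ by the countable set $\{1/s\mid s\in S_\alpha\}$, and there are only countably many steps; I would lift these generators and argue by transfinite induction along the filtration that $M$ is countably generated, whence so is its summand $F$.

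For the ``only if'' direction, let $F$ be countably generated and flat. By Theorem~\ref{main-theorem}, $F$ is quite flat, so by \cite[Corollary~6.14]{GT} it is a direct summand of an $\mathcal{S}$-filtered module $M$; fix a splitting $M=F\oplus F'$. The elementary observation that makes the summand harmless is that for \emph{any} submodule $N$ with $F\subseteq N\subseteq M$ one has $N=F\oplus(N\cap F')$, so $F$ is automatically a direct summand of $N$. Hence it suffices to locate a countably generated, $\mathcal{S}$-filtered submodule $N$ of $M$ that still contains $F$ and whose filtration has countable length. Since $\mathcal{S}$ is a set of countably presented modules, applying the Hill lemma \cite{GT} to the $\mathcal{S}$-filtration of $M$ and to the countably generated submodule $F$ produces a countably generated, $\mathcal{S}$-filtered submodule $N\subseteq M$ with $F\subseteq N$.

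The remaining and, I expect, only genuinely delicate point is that the Hill lemma guarantees $N$ is $\mathcal{S}$-filtered but says nothing a priori about the order type of its filtration, which could be uncountable. This is repaired by a purely chain-theoretic fact where Noetherianity re-enters: over a Noetherian ring, a countably generated module admits no strictly increasing well-ordered chain of submodules of length $\omega_1$. Indeed, writing $N=\bigcup_n P_n$ with each $P_n$ finitely generated, hence Noetherian, any chain $(N_\beta)_{\beta<\omega_1}$ meets each $P_n$ in an increasing chain that stabilizes at some $\beta_n<\omega_1$; by regularity of $\omega_1$ the whole chain stabilizes at $\beta^\ast=\sup_n\beta_n<\omega_1$, a contradiction. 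Discarding the trivial steps of the $\mathcal{S}$-filtration of $N$ therefore leaves a strictly increasing, hence countable-length, filtration by modules from $\mathcal{S}$, exhibiting $F$ as the desired direct summand. I would expect the main obstacle to lie precisely in this last length bound; the summand bookkeeping and the closure properties of flat modules are routine, whereas ruling out $\omega_1$-chains is what forces the filtration index to be a countable ordinal.
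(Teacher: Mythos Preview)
Your proof is correct and follows the same overall architecture as the paper's: invoke Theorem~\ref{main-theorem} to get quite flatness, then the Hill Lemma \cite[Theorem~7.10]{GT} to locate a countably generated $\mathcal{S}$-filtered submodule $N\subseteq M$ containing $F$, with $F$ a direct summand of $N$. The one substantive difference lies in how the length of the filtration is bounded. The paper simply appeals to an inspection of the last paragraph of the proof of \cite[Theorem~7.10]{GT}, where the construction of $N$ visibly produces a filtration of countable order type when the input set $X$ is countable. You instead supply a self-contained argument: over a Noetherian ring a countably generated module is a countable union of Noetherian submodules, so by regularity of $\omega_1$ it admits no strictly increasing $\omega_1$-chain, whence discarding the trivial steps of any filtration of $N$ leaves one of countable order type. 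Your route avoids opening up the Hill Lemma proof, at the modest cost of invoking Noetherianity a second time; the paper's route is shorter but less self-contained. Your explicit observation that $N=F\oplus(N\cap F')$ for any intermediate $F\subseteq N\subseteq M$ is also a clean way to handle the summand, which the paper leaves implicit.
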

\begin{proof}
The ``if'' part is clear. As for the ``only if'' part, by Theorem \ref{main-theorem}, $F$ is quite flat, so as pointed out in \cite[\S1.6]{PS}, it is a direct summand of a transfinite extension $E$ of $R$-modules of the form $S^{-1}R$, where $S$ are countable multiplicative subsets. Now by the Hill Lemma \cite[Theorem 7.10]{GT} (taking $\kappa = \aleph_1$, $M=E$, $N=0$, and $X$ a countable generating set of $F$ in (H4)), $F$ is in fact contained in a countably generated module $E' \subseteq E$, again filtered by modules of the form $S^{-1}R$. An inspection of the last paragraph of the proof of \cite[Theorem 7.10]{GT} then shows that the ordinal type of the filtration of $E'$ is countable.
\end{proof}

\section{Noetherian rings with bounded cardinality of spectrum}\label{section-bounded-spectrum}

Let $R$ be a Noetherian commutative ring with countable spectrum; then, by \cite[Theorem 1.17]{PS}, all flat $R$-modules are quite flat. In particular, all flat $R$-modules are transfinite extensions of countably generated flat modules. This result can be proved directly, which we are going to do now.

The following lemma is standard and holds also in the non-commutative case once the obvious alterations are made. We spell it out so we can refer to it easily.

\begin{lemma}\label{tensoring-equivalents}
Let $R$ be a commutative ring and $M$, $F$ $R$-modules such that $M \subseteq F$ and $I$ an ideal of $R$. The following are equivalent:
\begin{enumerate}
    \item the map $M \otimes_R (R/I) \to F \otimes_R (R/I)$ is injective,
    \item the map $M/IM \to F/IF$ is injective,
    \item $IF \cap M \subseteq IM$ (in which case necessarily $IF \cap M = IM$).
\end{enumerate}
\end{lemma}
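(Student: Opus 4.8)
The plan is to reduce everything to the standard natural isomorphism $N \otimes_R (R/I) \cong N/IN$, valid for any $R$-module $N$, and then to compute one kernel directly. For the equivalence of (1) and (2), I would first recall that tensoring the presentation $I \to R \to R/I \to 0$ with $N$ yields the right-exact sequence $I \otimes_R N \to N \to N \otimes_R (R/I) \to 0$, which identifies $N \otimes_R (R/I)$ with $N/IN$, the image of $I \otimes_R N \to N$ being precisely $IN$. This isomorphism is natural in $N$, so applying it to $N = M$ and $N = F$ and invoking the naturality square for the inclusion $M \hookrightarrow F$ shows that the horizontal map $M \otimes_R (R/I) \to F \otimes_R (R/I)$ is carried onto the map $M/IM \to F/IF$ by vertical isomorphisms. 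Since isomorphisms both preserve and reflect injectivity, (1) and (2) are equivalent.

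For (2) $\Leftrightarrow$ (3), I would analyze the map $\phi \colon M/IM \to F/IF$, $m + IM \mapsto m + IF$, directly. It is well defined because $IM \subseteq IF$. An element $m + IM$ lies in $\ker \phi$ exactly when $m \in IF$, i.e., when $m \in IF \cap M$; hence $\ker \phi = (IF \cap M)/IM$, where the quotient makes sense because $IM \subseteq IF \cap M$ (indeed $IM \subseteq M$ and $IM \subseteq IF$). Thus $\phi$ is injective if and only if $(IF \cap M)/IM = 0$, that is, if and only if $IF \cap M \subseteq IM$, which is condition (3).

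Finally, the parenthetical claim that (3) forces $IF \cap M = IM$ is immediate: the inclusion $IM \subseteq IF \cap M$ always holds, as just noted, so combined with the inclusion asserted in (3) we obtain equality.

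I do not expect any genuine obstacle here; this is a routine ``tensoring with $R/I$ equals reduction modulo $I$'' computation. The only points that require a moment's care are the naturality of the identification $N \otimes_R (R/I) \cong N/IN$, which is what lets us transport injectivity back and forth between (1) and (2), and verifying the submodule inclusions $IM \subseteq IF \cap M$ so that the kernel computation and the final equality are legitimate.
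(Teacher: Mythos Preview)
Your proposal is correct and follows essentially the same approach as the paper: both arguments establish (1)$\Leftrightarrow$(2) via the natural isomorphism $N\otimes_R(R/I)\cong N/IN$ obtained from the presentation of $R/I$, and both establish (2)$\Leftrightarrow$(3) by identifying the kernel of $M/IM\to F/IF$ with $(IF\cap M)/IM$. Your write-up is slightly more explicit about naturality and the inclusion $IM\subseteq IF\cap M$, but the content is the same.
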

\begin{proof}
(1) $\Leftrightarrow$ (2): By tensoring the short exact sequence $0 \to I \to R \to R/I \to 0$ by an $R$-module $A$ and noting that the image of $A \otimes_R I \to A \otimes_R R \cong A$ is precisely $IA$, we get that $A \otimes_R (R/I)$ is naturally isomorphic to $A/IA$ for any $A$ and $I$.

(2) $\Leftrightarrow$ (3): The kernel of the composition $M \ito F \to F/IF$ is precisely $IF \cap M$, so $M/IM \to F/IF$ is injective if and only if $IF \cap M \subseteq IM$, and since $IF \cap M \supseteq IM$ holds always, this is also equivalent to $IF \cap M = IM$.
\end{proof}

The following is again a known result: The general (not necessarily commutative) case is e.g.\ \cite[Lemma 19.18]{AF}, and the Noetherian case was established in \cite[Lemma 4.2 and the following paragraph]{E}, although the proof is quite different.

\begin{lemma}\label{primes-are-sufficient}
Let $R$ be a commutative ring, $F$ a flat $R$-module and $M$ a submodule of $F$. Then $M$ is a pure submodule of $F$ if and only if for each finitely generated ideal $I$ of $R$, the natural map
\[ M \otimes_R (R/I) \to F \otimes_R (R/I) \]
is injective. If $R$ is a Noetherian commutative ring, then it suffices to take for $I$ the prime ideals of $R$.
\end{lemma}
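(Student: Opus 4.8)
The plan is to reduce the purity of $M$ in $F$ to the flatness of the quotient $C = F/M$, and then to apply standard criteria for flatness. Consider the short exact sequence $0 \to M \to F \to C \to 0$. For an arbitrary $R$-module $N$, the associated long exact sequence of $\Tor$, together with the vanishing of $\Tor_1^R(N, F)$ coming from the flatness of $F$, yields an exact sequence
\[ 0 \to \Tor_1^R(N, C) \to N \otimes_R M \to N \otimes_R F. \]
Thus the kernel of $N \otimes_R M \to N \otimes_R F$ is naturally identified with $\Tor_1^R(N, C)$. Since $M$ is pure in $F$ precisely when $N \otimes_R M \to N \otimes_R F$ is injective for every $N$, this identification shows that $M$ is pure in $F$ if and only if $\Tor_1^R(N, C) = 0$ for all $N$, that is, if and only if $C$ is flat.

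To obtain the first equivalence I would now invoke the ideal criterion for flatness: a module $C$ is flat if and only if $\Tor_1^R(R/I, C) = 0$ for every finitely generated ideal $I$. Specializing $N = R/I$ in the exact sequence above identifies this $\Tor$ group with the kernel of $M \otimes_R (R/I) \to F \otimes_R (R/I)$, so combining the two paragraphs gives exactly the claim that $M$ is pure in $F$ if and only if each of these maps is injective.

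For the Noetherian refinement I would use prime filtrations: over a Noetherian ring every finitely generated module $N$ admits a finite filtration with successive quotients isomorphic to $R/\p$ for various prime ideals $\p$. Assume that $M \otimes_R (R/\p) \to F \otimes_R (R/\p)$ is injective for every prime $\p$, which by the specialization above means $\Tor_1^R(R/\p, C) = 0$ for all $\p$. Running an induction on the length of such a filtration, and using at each step the segment $\Tor_1^R(N', C) \to \Tor_1^R(N, C) \to \Tor_1^R(R/\p, C)$ of the long exact sequence (whose outer terms vanish by the inductive hypothesis and the assumption), one concludes that $\Tor_1^R(N, C) = 0$ for every finitely generated $N$. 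As $R$ is Noetherian this forces $C$ to be flat, hence $M$ to be pure in $F$. The only point demanding care is this Noetherian step, where one must verify that the vanishing of $\Tor_1$ indeed propagates through the prime filtration; the remainder is a direct consequence of the flatness of $F$ and the classical ideal criterion.
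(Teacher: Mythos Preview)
Your proof is correct and follows essentially the same route as the paper's: both reduce purity of $M$ in $F$ to flatness of $C=F/M$ via the long exact sequence of $\Tor$ (using flatness of $F$), then test flatness of $C$ against $R/I$ for finitely generated $I$, and in the Noetherian case reduce further to prime ideals via prime filtrations. The only cosmetic difference is packaging: the paper phrases the reductions in terms of transfinite extensions of cyclic modules (resp.\ of modules $R/\p$), whereas you cite the ideal criterion for flatness directly and handle the Noetherian refinement by finite prime filtrations of finitely generated modules followed by a direct-limit argument.
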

\begin{proof}
If the inclusion of $M$ into $F$ is pure, then it stays injective after tensoring with any 
$R$-module, in particular with $R/I$.

On the other hand, since $F$ is flat, $M$ is a pure submodule if and only if the factormodule $C = F/M$ is flat, i.e., $\Tor_1^R(C, A) = 0$ for every $R$-module $A$. However, the vanishing of $\Tor$ is preserved by transfinite extensions, and since every $R$-module is a transfinite extension of cyclic modules, it suffices to verify that $\Tor_1^R(C, R/I) = 0$ for every ideal of $R$. Moreover, since every ideal is the directed union of its finitely generated subideals, every cyclic module is the direct limit of modules of the form $R/I$ for $I$ finitely generated, and since $\Tor$ commutes with direct limits, we see that it is enough to test that $\Tor_1^R(C, R/I) = 0$ for every finitely generated ideal of $R$.
Since $F$ is flat, $\Tor_1^R(F, R/I) = 0$, so $\Tor_1^R(C, R/I)$ is precisely the kernel of the map $M \otimes_R (R/I) \to F \otimes_R (R/I)$, hence it is zero if and only if this map is injective.

If $R$ is a Noetherian ring, then every module is a transfinite extension of modules of the form $R/\p$, where $\p$ is a prime ideal of $R$. Therefore it suffices to check only that $\Tor_1^R(C, R/\p) = 0$ and the argument concludes in the same way.
\end{proof}

If $F$ is not flat, Lemma \ref{primes-are-sufficient} (even its weaker form) is no longer valid even in the Noetherian case, which we are most interested in:

\begin{exmpl}\label{example-primes-not-sufficient}
Let $k$ be a field, $k[x, y]$ the ring of polynomials in two variables and $R = k[x, y]/(x^2, xy, y^2)$. We will denote the cosets of $x$ and $y$ in $R$ again by $x$ and $y$ for simplicity. Let $F$ be a $k$-vector space with five-element basis $\{a, b, s, t, e\}$, on which we define the actions of $x$ and $y$ as follows: $xs = ys = 0$, $xt = yt = 0$, $xa = s$, $ya = t$, $xb = t$, $yb = 0$, $xe = s$, $ye = 0$; it is easy to see that this makes $F$ an $R$-module. Furthermore, the $k$-subspace generated by $\{a, b, s, t\}$ is an $R$-submodule of $F$, which we denote by $M$. We claim that $IM = IF \cap M$ for every ideal $I$ of $R$, but $M$ is not pure in $F$.

Firstly, observe that $F/M$ is the simple $R$-module on which $x$ and $y$ act by zero. Since
\begin{align*}
x(\alpha a + \beta b + \varepsilon e) &= (\alpha + \varepsilon) s + \beta t,\\
y(\alpha a + \beta b + \varepsilon e) &= \alpha t
\end{align*}
for $\alpha, \beta, \varepsilon \in k$, the only $k$-linear combination of $a$, $b$, $e$ annihilated by both $x$ and $y$ is the trivial one. Therefore $k$-linear combinations of $s$ and $t$ are the only elements of $F$ killed by both $x$ and $y$. We conclude that there is no section of the $R$-module projection $F \to F/M$, hence $M$ is not a direct summand and consequently, not a pure submodule of $F$.

Secondly, note that whenever $I$ is an ideal of $R$ such that $I \not\subseteq (y)$, then $s \in IM$: Either $I$ contains an element $i$ with a non-zero absolute term, in which case $is = s$, or $I \subseteq (x, y)$. In the latter case, there are $u, v \in k$, $u \neq 0$ such that $ux + vy \in I$; then one can find $\alpha, \beta \in k$ such that $(ux + vy)(\alpha a + \beta b) = s$ by solving a system of two linear equations with regular matrix.

A typical element $q$ of $IF$ is of the form
\[ q = i_1(m_1 + \varepsilon_1 e) + \dots + i_n(m_n + \varepsilon_n e), \]
where $i_1, \dots, i_n \in I$, $m_1, \dots, m_n \in M$ and $\varepsilon_1, \dots, \varepsilon_n \in k$.
The element $r = (i_1 \varepsilon_1 + \dots + i_n \varepsilon_n)e$ is a linear combination of $s$ and $e$; for $q$ to be in $IF \cap M$, $r$ must be a multiple of $s$, therefore $r \in IM$ by the discussion above. Since $i_1 m_1 + \dots + i_n m_n \in IM$, we conclude that $q \in IM$ as desired.

Finally, if an ideal $I$ satisfies $I \subseteq (y)$, then $IM = IF$ and we are done.
\end{exmpl}

Let $\kappa$ be a regular cardinal. We say that a commutative ring $R$ is \emph{$<\kappa$-Noetherian} if every ideal of $I$ is $<\kappa$-generated. Note that by \cite[Lemma 6.31]{GT}, submodules of $<\kappa$-generated modules over a $<\kappa$-Noetherian ring are $<\kappa$-generated; in particular, every $<\kappa$-generated module is $<\kappa$-presented.

\begin{lemma}\label{single-ideal-purification}
Let $\kappa$ be an uncountable regular cardinal, $R$ a $<\kappa$-Noetherian commutative ring, $I$ an ideal of $R$, $F$ an $R$-module and $X$ a subset of $F$ of cardinality $<\kappa$. Then there is a $<\kappa$-generated submodule $M \subseteq F$ such that $X \subseteq M$ and $IM = IF \cap M$.
\end{lemma}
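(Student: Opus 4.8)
The plan is to build $M$ as the union of a countable increasing chain of $<\kappa$-generated submodules, arranged so that every element of $IF \cap M$ is ``captured'' by a finite expression with coefficients in $I$ using elements already in $M$. By Lemma~\ref{tensoring-equivalents}, the desired conclusion $IM = IF \cap M$ is precisely the statement that the inclusion $M \subseteq F$ induces an injection $M/IM \to F/IF$; I will nonetheless work directly with the condition $IF \cap M = IM$, recalling that $IM \subseteq IF \cap M$ always holds, so that only the reverse inclusion is at stake.

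First I would set $M_0$ to be the submodule $\langle X \rangle$ of $F$ generated by $X$, which is $<\kappa$-generated since $|X| < \kappa$. The obstruction to purity is that $IF \cap M_0$ may be strictly larger than $IM_0$: there can be elements of $M_0$ lying in $IF$ but not expressible with coefficients in $I$ using elements of $M_0$. To repair this one level at a time, suppose $M_n$ has been constructed and is $<\kappa$-generated. Since $R$ is $<\kappa$-Noetherian, the submodule $IF \cap M_n \subseteq M_n$ is again $<\kappa$-generated by \cite[Lemma 6.31]{GT}; choose a generating set $\{m_\xi\}$ of cardinality $<\kappa$. As each $m_\xi$ lies in $IF$, write it as a finite sum $m_\xi = \sum_j i_{\xi,j} f_{\xi,j}$ with $i_{\xi,j} \in I$ and $f_{\xi,j} \in F$, and let $Y_n$ be the set of all the elements $f_{\xi,j}$ that occur. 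Then $|Y_n| < \kappa$, since $Y_n$ is a union of $<\kappa$ many finite sets. Put $M_{n+1} = M_n + \langle Y_n \rangle$, which is again $<\kappa$-generated. By construction every generator $m_\xi$ of $IF \cap M_n$ now lies in $IM_{n+1}$, so $IF \cap M_n \subseteq IM_{n+1}$.

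Finally I would set $M = \bigcup_{n < \omega} M_n$. This contains $X$ and is $<\kappa$-generated, being a countable increasing union of $<\kappa$-generated modules. To verify $IF \cap M = IM$, take any $m \in IF \cap M$. Because the $M_n$ form a chain with union $M$, there is some $n$ with $m \in M_n$, whence $m \in IF \cap M_n \subseteq IM_{n+1} \subseteq IM$. This gives $IF \cap M \subseteq IM$, and the reverse inclusion is automatic.

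The main point to watch is the cardinality bookkeeping. At each inductive step it is crucial that $IF \cap M_n$ be $<\kappa$-generated, so that only $<\kappa$ new elements $f_{\xi,j}$ need to be adjoined; this is exactly where the $<\kappa$-Noetherian hypothesis enters. It is then crucial that $\kappa$ be uncountable and regular, so that the union $\bigcup_{n<\omega} M_n$ over countably many stages---each contributing $<\kappa$ generators---still has $<\kappa$ generators rather than reaching $\kappa$. With these two bounds in hand, the closing-off construction forces $IF \cap M = IM$, since any witness to a failure would already have been repaired at a finite stage.
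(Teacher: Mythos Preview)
Your proposal is correct and follows essentially the same approach as the paper's own proof: build a countable increasing chain of $<\kappa$-generated submodules, at each stage adjoining witnesses from $F$ for a $<\kappa$-sized generating set of $IF\cap M_n$ so that $IF\cap M_n\subseteq IM_{n+1}$, and take the union. The cardinality bookkeeping and the final verification are also handled identically.
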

\begin{proof}
Let $X_0 = X$. Denote by $M_0$ the submodule of $F$ generated by $X_0$; this is a $<\kappa$-generated module. 
Since $R$ is $<\kappa$-Noetherian, the submodule $I F \cap M_0$ of $M_0$ is $<\kappa$-generated, too; let $Y_0$ be a set of cardinality $<\kappa$ generating this module.
Every $y \in Y_0$ can be written as
\[ y = p_1 y_1 + \dots + p_n y_n, \]
where $p_i \in I$ and $y_i \in F$ for $i = 1, \dots, n$. Gathering these $y_i$'s for all $y \in Y_0$, we obtain a subset $Z_0 \subseteq F$ of cardinality $<\kappa$. By the construction, the submodule $M_1 \subseteq F$ generated by $X_0 \cup Z_0$ has the property
$I F \cap M_0 \subseteq I M_1$.

Now repeat this procedure, starting with the set $X_1 = X_0 \cup Z_0$ of cardinality $<\kappa$, obtaining a subset $Z_1 \subseteq F$ of cardinality $<\kappa$. Continuing in this fashion, i.e., repeating the procedure with $X_{i+1} = X_i \cup Z_i$, we obtain an $\N_0$-indexed chain $X_0 \subseteq X_1 \subseteq X_2 \subseteq \ldots$ of subsets of $F$ of cardinality $< \kappa$; let $X$ be its union. Note that the cardinality of $X$ is less than $\kappa$, since $\kappa$ is uncountable and regular. We claim that the submodule $M \subseteq F$ generated by $X$ has the desired property: This is because $M = \bigcup_{n \in \mathbb N_0} M_n$ and
\[ I F \cap M = \bigcup_{n \in \N_0} (I F \cap M_n) \subseteq \bigcup_{n \in \mathbb N_0} I M_{n+1} = I M. \]
\end{proof}

\begin{lemma}\label{kappa-presented-exists}
Let $R$ be a Noetherian commutative ring with spectrum of cardinality less than $\kappa$, where $\kappa$ is an uncountable regular cardinal. Let $F$ be a flat $R$-module and $X$ a subset of $F$ of cardinality $<\kappa$. Then there is a pure submodule $M \subseteq F$ such that $X \subseteq M$ and $M$ is $<\kappa$-generated.
\end{lemma}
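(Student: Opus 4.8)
The plan is to reduce global purity of $M$ in $F$ to the collection of per-prime conditions $\p F \cap M = \p M$ supplied by Lemma~\ref{primes-are-sufficient} (via the equivalence in Lemma~\ref{tensoring-equivalents}), and then to secure all of these conditions at once by a transfinite iteration of the single-ideal purification from Lemma~\ref{single-ideal-purification}. First I would note that a Noetherian ring is $<\kappa$-Noetherian for any uncountable $\kappa$ (ideals being finitely generated), so Lemma~\ref{single-ideal-purification} applies to every ideal. Write $\lambda = |\Spec R| < \kappa$ and enumerate the primes as $\{\p_i \mid i < \lambda\}$. By Lemma~\ref{primes-are-sufficient} it then suffices to produce a $<\kappa$-generated submodule $M \supseteq \langle X\rangle$ with $\p_i F \cap M = \p_i M$ for every $i < \lambda$.

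The construction is a chain $(M_\alpha \mid \alpha \le \lambda\cdot\omega)$ of $<\kappa$-generated submodules of $F$. Set $M_0 = \langle X\rangle$, and fix a schedule $(\q_\alpha \mid \alpha < \lambda\cdot\omega)$ that runs through all $\lambda$ primes $\omega$ times, the $n$-th round listing $\p_0,\p_1,\dots$ in order, so that each prime occurs at the stages $\lambda\cdot n + i$ and is therefore scheduled cofinally in $\lambda\cdot\omega$. At a successor step I apply Lemma~\ref{single-ideal-purification} with $I = \q_\alpha$ and a generating set of $M_\alpha$ of size $<\kappa$, obtaining a $<\kappa$-generated $M_{\alpha+1}\supseteq M_\alpha$ with $\q_\alpha M_{\alpha+1} = \q_\alpha F \cap M_{\alpha+1}$; at limit steps I take unions. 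Put $M = M_{\lambda\cdot\omega}$; then $X \subseteq M$ automatically.

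To check purity, fix a prime $\p = \p_i$ and an element $m \in \p F \cap M$. Then $m \in M_\alpha$ for some $\alpha$, and by cofinality of the schedule for $\p$ there is $\beta \ge \alpha$ with $\q_\beta = \p$; hence $m \in \p F \cap M_\beta \subseteq \p F \cap M_{\beta+1} = \p M_{\beta+1} \subseteq \p M$. This yields $\p F \cap M \subseteq \p M$, the reverse inclusion being automatic, so Lemma~\ref{tensoring-equivalents} gives injectivity of $M \otimes_R (R/\p) \to F \otimes_R (R/\p)$ for every prime $\p$, and Lemma~\ref{primes-are-sufficient} then certifies that $M$ is pure in $F$. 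Finally $M$ is $<\kappa$-generated: it is the union of a chain of length $\lambda\cdot\omega < \kappa$ of $<\kappa$-generated modules, and $\kappa$ is regular, so the union of its generating sets again has cardinality $<\kappa$.

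The main obstacle is precisely the book-keeping in the iteration: a single application of Lemma~\ref{single-ideal-purification} purifies with respect to one prime but may spoil the purity already arranged for the others as the module grows, so the crucial point is to revisit every prime cofinally often and then observe that any witness $m$, appearing at some bounded stage $\alpha$, is captured by the next purification step for its own prime at some $\beta \ge \alpha$. Keeping the chain length $<\kappa$, so that regularity of $\kappa$ preserves $<\kappa$-generatedness of the union, is the second place where the hypotheses $\lambda < \kappa$ and $\kappa$ uncountable regular are genuinely used.
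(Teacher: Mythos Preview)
Your proof is correct and follows essentially the same approach as the paper: iterate Lemma~\ref{single-ideal-purification} along a chain of length $<\kappa$, scheduling each prime cofinally often, and then invoke Lemmas~\ref{tensoring-equivalents} and~\ref{primes-are-sufficient} to conclude purity. The only difference is cosmetic bookkeeping: the paper runs a chain of length $\lambda$ (taken to be $\max(|\Spec R|,\aleph_0)$) using a surjection $\psi\colon\lambda\to\lambda$ with unbounded fibers, whereas you run a chain of length $\lambda\cdot\omega$ with an explicit round-robin schedule; both devices achieve the same cofinality property and keep the chain length below $\kappa$.
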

\begin{proof}
We prove the lemma by ``iterating Lemma \ref{single-ideal-purification} sufficiently many times'' for each prime ideal of $R$. More precisely, let $\lambda_0$ be the cardinality of the spectrum of $R$.  Put $\lambda=\lambda_0$ if $\lambda_0$ is infinite, and let $\lambda$ be the countable cardinality if $\lambda_0$ is finite. Let $\psi\colon \lambda \to \lambda$ be a surjective function such that for each ordinal $\alpha < \lambda$, the preimage $\psi^{-1}(\alpha)$ is unbounded in $\lambda$. Also let $\{\p_\alpha \mid \alpha < \lambda\}$ be a numbering of the spectrum of $R$ in which every prime ideal of $R$ appears at least once. Finally, put $M_0 = 0$.

Now starting with $X_0 = X$, apply Lemma \ref{single-ideal-purification} with $I = \p_{\psi(0)}$ to get $<\kappa$-generated submodule $M_1 \subseteq F$ such that $X_0 \subseteq M_1$ and
$\p_{\psi(0)} M_1 = \p_{\psi(0)}F \cap M_1$.
More generally, for every $\alpha < \lambda$, if $M_\alpha$ is constructed, let $M_{\alpha+1}$ be the result of applying Lemma \ref{single-ideal-purification} with the prime ideal $\p_{\psi(\alpha)}$ and with a generating set of $M_\alpha$ of cardinality $<\kappa$. For every limit ordinal $\alpha < \lambda$, let $M_\alpha = \bigcup_{\beta<\alpha}M_\beta$; since $\kappa$ is regular, this keeps $M_\alpha$ $<\kappa$-generated for each $\alpha < \lambda$.

Put $M = \bigcup_{\beta<\lambda}M_\beta$. Since $\lambda < \kappa$, $M$ is $<\kappa$-generated. Moreover, by the choice of $\psi$, for every $\gamma < \lambda$, $M$ is the union of those $M_{\alpha+1}$ for which $\psi(\alpha) = \gamma$. Therefore, for every $\gamma < \lambda$,
\[ \p_\gamma F \cap M = \bigcup_{\substack{\alpha < \lambda \\ \psi(\alpha) = \gamma}} (\p_\gamma F \cap M_{\alpha+1}) = \bigcup_{\substack{\alpha < \lambda \\ \psi(\alpha) = \gamma}} \p_\gamma M_{\alpha+1} = \p_\gamma M. \]
We conclude that $\p M = \p F \cap M$ holds for every prime $\p$ as desired, which by Lemmas \ref{tensoring-equivalents} and \ref{primes-are-sufficient} means that $M$ is a pure submodule of $F$.
\end{proof}

Note that in the case $\kappa = \aleph_1$, the lemma can be proved using already known results: Knowing that all flat modules are quite flat in this case \cite[Theorem~1.17]{PS}, it follows easily from the Hill Lemma \cite[Theorem 7.10]{GT}.

\begin{remark}
Let us comment here on the overall situation concerning ``purifications'': It is a standard fact that for a ring $R$ of cardinality not exceeding an infinite cardinal $\lambda$, every $R$-module $F$ and subset $X \subseteq F$ of cardinality at most $\lambda$, there is a pure submodule $M \subseteq F$ of cardinality at most $\lambda$ containing $X$; see e.g.\ \cite[Lemma 2.25(a)]{GT}. Lemma \ref{kappa-presented-exists} shows that when $R$ is commutative Noetherian and $F$ is flat, then instead of the cardinality of the ring, one can take a potentially sharper bound, the cardinality of the spectrum (which, for Noetherian rings, cannot exceed the cardinality of the ring). This is thanks to Lemma \ref{primes-are-sufficient}.

Example \ref{example-primes-not-sufficient} shows that when enlarging arbitrary submodules of non-flat modules to pure submodules, one has to add more than just ``divisors'', in particular, one cannot rely on Lemma \ref{primes-are-sufficient}. However, we do not know whether Lemma \ref{kappa-presented-exists} holds for non-flat modules over commutative Noetherian rings or not.
\end{remark}

\begin{remark} \label{osofsky-vs-Q-remark}
In the special case when $F$ is a flat and  Mittag-Leffler module (see e.g.\ \cite{EGPT} or \cite{GT} for the definition), a stronger result than Lemma~\ref{kappa-presented-exists} is known \cite[Lemma 2.7(2)]{EGPT}:
For any ring $R$, a flat Mittag-Leffler module $F$, an uncountable cardinal $\kappa$, and a subset $X$ in $F$ of cardinality $<\kappa$, there exists a pure submodule $M\subseteq F$ such that $X\subseteq M$ and $M$ is $<\kappa$-generated. Since free modules are flat Mittag-Leffler and a pure submodule of a flat Mittag-Leffler module is flat Mittag-Leffler \cite[Corollary 3.20]{GT}, this also covers the case of pure submodules of free modules settled by Osofsky \cite[Theorem I.8.10]{FS}.

 Generally speaking, however, the bound of Lemma \ref{kappa-presented-exists} is sharp.
 Indeed, let $k$ be a field of infinite cardinality $\kappa$ and $R=k[x]$ the ring of polynomials in one variable $x$ with coefficients in $k$.
 Then the spectrum of $R$ has cardinality $\kappa$, and the field of rational functions $Q=k(x)$ is a $\kappa$-generated flat $R$-module which has no nonzero proper pure submodules.
 Taking $X\subseteq Q$ to be the one-element set $X=\{1\}$, there does \emph{not} exist a $<\kappa$-generated submodule $M$ in $Q$ containing $X$.
\end{remark}

We are now ready to prove the improved deconstructibility of flat modules.

\begin{thm}\label{flats-kappa-filtered}
Let $R$ be a Noetherian commutative ring with spectrum of cardinality less than $\kappa$, where $\kappa$ is an uncountable regular cardinal. Then every flat module is a transfinite extension of $<\kappa$-generated flat modules.
\end{thm}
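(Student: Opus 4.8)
The plan is to construct, by transfinite recursion, a continuous increasing chain $(M_\alpha \mid \alpha \le \tau)$ of \emph{pure} submodules of $F$ with $M_0 = 0$ and $M_\tau = F$, arranged so that every successive quotient $M_{\alpha+1}/M_\alpha$ is $<\kappa$-generated. Because each $M_\alpha$ will be pure in the flat module $F$, each quotient $M_{\alpha+1}/M_\alpha$ will automatically be flat as well, so such a chain exhibits $F$ as the desired transfinite extension of $<\kappa$-generated flat modules.

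First I would fix a well-ordering $F = \{f_\nu \mid \nu < \tau\}$ of the underlying set, set $M_0 = 0$, and take $M_\alpha = \bigcup_{\beta<\alpha} M_\beta$ at limit ordinals. The crucial successor step is where I would invoke Lemma \ref{kappa-presented-exists}, but applied not to $F$ itself but to the quotient. Suppose $M_\alpha$ is already constructed and pure in $F$; then $F/M_\alpha$ is flat. I would apply Lemma \ref{kappa-presented-exists} to the flat $R$-module $F/M_\alpha$ (the ring $R$ being unchanged, so its spectrum still has cardinality $<\kappa$) and to the one-element set $X = \{\bar f_\alpha\}$, the image of $f_\alpha$, obtaining a $<\kappa$-generated pure submodule $\overline M \subseteq F/M_\alpha$ with $\bar f_\alpha \in \overline M$. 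Defining $M_{\alpha+1}$ to be the preimage of $\overline M$ under the projection $F \to F/M_\alpha$ gives $M_{\alpha+1} \supseteq M_\alpha$, with $f_\alpha \in M_{\alpha+1}$ and $M_{\alpha+1}/M_\alpha \cong \overline M$ visibly $<\kappa$-generated.

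I would then verify the two stability properties of purity that make the recursion go through. At successor stages, $M_{\alpha+1}$ is pure in $F$: one has $F/M_{\alpha+1} \cong (F/M_\alpha)/\overline M$, which is flat because $\overline M$ is pure in the flat module $F/M_\alpha$, whence $M_{\alpha+1}$ is pure in the flat $F$; the same computation shows $M_{\alpha+1}/M_\alpha \cong \overline M$ is a pure submodule of the flat $F/M_\alpha$ and hence flat. At limit stages, the directed union $M_\alpha = \bigcup_{\beta<\alpha} M_\beta$ is pure in $F$, since $F/M_\alpha = \varinjlim_{\beta<\alpha} F/M_\beta$ is a direct limit of flat modules and therefore flat. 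Finally, since $f_\alpha \in M_{\alpha+1}$ for every $\alpha < \tau$, I obtain $M_\tau = F$ (deleting the redundant stages where $M_{\alpha+1} = M_\alpha$ if one prefers nonzero quotients).

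I expect the main obstacle to be the tension between two competing demands: keeping every successive quotient $<\kappa$-generated while the submodules $M_\alpha$ themselves grow without bound, so that one cannot hope to supply Lemma \ref{kappa-presented-exists} with a $<\kappa$-sized generating set of all of $M_\alpha$. The resolution, which is the one genuinely nonroutine idea beyond the bookkeeping, is to feed the flat \emph{quotient} $F/M_\alpha$ into Lemma \ref{kappa-presented-exists} rather than $F$ — legitimate precisely because the quotient of a flat module by a pure submodule is again flat over the same ring. Everything else reduces to the transitivity of purity and its preservation under directed unions, both of which follow from the characterization of purity in a flat module by flatness of the quotient.
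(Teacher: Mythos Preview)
Your argument is correct and follows essentially the same route as the paper's own proof: build a continuous chain of pure submodules by applying Lemma~\ref{kappa-presented-exists} to the flat quotient $F/M_\alpha$ at each successor step and pulling back the resulting $<\kappa$-generated pure submodule. The only cosmetic difference is that you fix a well-ordering of $F$ in advance to select the element $f_\alpha$, whereas the paper simply picks any $x\in F\setminus F_\alpha$; your write-up is also more explicit about why purity persists at successor and limit stages, which the paper dismisses as ``quite standard''.
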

\begin{proof}
This is quite standard: Let $F$ be a flat module; we are going to build a filtration of $F$ by pure submodules such that the consecutive factors are $<\kappa$-generated. Let $F_0 = 0$. For every ordinal $\alpha$, pick $x \in F \setminus F_\alpha$ (if it exists, otherwise the construction is finished) and let $M$ be the $<\kappa$-generated pure submodule of the flat module $F/F_\alpha$ containing $x + F_\alpha$; this exists thanks to Lemma \ref{kappa-presented-exists}. Further let $F_{\alpha+1}$ be the preimage of $M$ in the map $F \to F/F_\alpha$; then $F_{\alpha+1}$ is a pure submodule of $F$ containing $x$ and $F_{\alpha+1}/F_\alpha \cong M$ is $<\kappa$-generated. For every limit ordinal $\alpha$, put $F_\alpha = \bigcup_{\beta<\alpha} F_\beta$. This way we exhaust the module $F$ as desired.
\end{proof}

Finally, as a special case, we obtain a new proof of~\cite[Theorem~1.17]{PS}:

\begin{cor} \label{main-corollary}
Let $R$ be a Noetherian commutative ring with countable spectrum. Then every flat module is quite flat.
\end{cor}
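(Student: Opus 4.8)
The plan is to specialize Theorem~\ref{flats-kappa-filtered} to the smallest uncountable cardinal and then feed the resulting filtration into Theorem~\ref{main-theorem}. Concretely, I would take $\kappa=\aleph_1$, which is an uncountable regular cardinal. Since the spectrum of $R$ is assumed countable, its cardinality is less than $\aleph_1$, so the hypothesis of Theorem~\ref{flats-kappa-filtered} is met. Hence every flat $R$-module $F$ is a transfinite extension of $<\aleph_1$-generated flat modules. The first thing to note is simply that being $<\aleph_1$-generated is the same as being (at most) countably generated, so these subquotients are exactly countably generated flat modules.

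Next I would invoke Theorem~\ref{main-theorem}: over the Noetherian commutative ring $R$, every countably generated flat module is quite flat. Consequently $F$ is a transfinite extension of quite flat modules, and it remains only to pass from the pieces of the filtration to $F$ itself.

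The one substantive point—really the only place where anything beyond assembling the two theorems is needed—is that the class of quite flat modules is closed under transfinite extensions. This is immediate from the definition recalled in the introduction: quite flat modules are precisely the modules $F$ with $\Ext_R^1(F,C)=0$ for every almost cotorsion module $C$, i.e., they constitute the left Ext-orthogonal class $^{\perp}\mc C$, where $\mc C$ is the class of almost cotorsion $R$-modules. Any orthogonal class of this form is closed under transfinite extensions by the Eklof Lemma \cite{GT}. Applying this to the filtration produced above, we conclude that $F$ itself is quite flat.

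I do not anticipate a genuine obstacle here, as both inputs (Theorems~\ref{main-theorem} and~\ref{flats-kappa-filtered}) are already established and the closure under transfinite extensions is a standard homological fact. The only thing requiring care is matching the two notions of size, namely confirming that ``$<\aleph_1$-generated'' in Theorem~\ref{flats-kappa-filtered} coincides with ``countably generated'' in Theorem~\ref{main-theorem}; this identification is immediate.
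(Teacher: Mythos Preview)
Your proposal is correct and follows exactly the paper's own proof: specialize Theorem~\ref{flats-kappa-filtered} to $\kappa=\aleph_1$, then apply Theorem~\ref{main-theorem} to the countably generated flat subquotients. The only difference is that you spell out the closure of quite flat modules under transfinite extensions via the Eklof Lemma, which the paper leaves implicit.
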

\begin{proof}
By Theorem \ref{flats-kappa-filtered} with $\kappa = \aleph_1$, every flat module is a transfinite extension of countably generated flat modules. By Theorem \ref{main-theorem}, countably generated flat modules are quite flat, hence all flat modules are quite flat.
\end{proof}

\section{Non-Noetherian rings} \label{section-cfq}

Let $R$ be a commutative ring. We will say that $R$ is a \emph{CFQ ring} if all countably presented flat $R$-modules are quite flat.

Recall that an associative ring $R$ is called \emph{left perfect} if all flat left $R$-modules are projective \cite{Bas}.
Obviously, all perfect commutative rings are CFQ. Theorem \ref{main-theorem} tells us that all Noetherian commutative rings are CFQ.

The following assertion is provable in the same way as Corollary \ref{countable-ordinals}: Over a CFQ ring $R$, a module $F$ is a countably presented flat module if and only if it is a direct summand of a transfinite extension, indexed by a countable ordinal, of $R$-modules of the form $S^{-1}R$, where $S$ ranges over countable multiplicative subsets of $R$.

\begin{prop} \label{zero-dimensional}
 A local ring of Krull dimension $0$ is CFQ if and only if it is perfect.
\end{prop}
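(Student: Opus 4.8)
The plan is to prove both implications, with all the substance lying in the forward direction. The backward implication is immediate: if $R$ is perfect then every flat module is projective, and every projective module is quite flat (it is a direct summand of a free module $R^{(I)}$, which is a transfinite extension of copies of $R = S^{-1}R$ for the trivial countable multiplicative set $S = \{1\}$, and quite flat modules are closed under direct summands). In particular every countably presented flat module is quite flat, so $R$ is CFQ.

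For the forward direction I would first record the crucial feature of a local ring $R$ of Krull dimension $0$: its maximal ideal $\m$ is the unique prime and hence equals the nilradical, so every element of $\m$ is nilpotent. Consequently, for any multiplicative subset $S \subseteq R$ the localization $S^{-1}R$ is either $R$ (when $S$ consists of units) or $0$ (as soon as $S$ meets $\m$, since inverting a nilpotent element forces $1 = 0$). It follows that over such a ring \emph{quite flat} and \emph{projective} coincide: a transfinite extension of copies of $R$, the only nonzero localizations available, is projective because the class of projective modules is closed under transfinite extensions \cite{GT}, and a direct summand of a projective module is projective; conversely every projective module is quite flat as just noted. Thus CFQ becomes exactly the statement that every countably presented flat $R$-module is projective.

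I would then argue by contraposition, producing a countably presented flat non-projective module whenever $R$ fails to be perfect. Since $R$ is commutative local, $J(R) = \m$ and $R/\m$ is a field, so by Bass's characterization of perfect rings \cite[Theorem~28.4]{AF} the failure of perfectness means precisely that $\m$ is not T-nilpotent; hence there is a sequence $a_1, a_2, \dots \in \m$ with $a_1 a_2 \cdots a_n \neq 0$ for every $n$. Let $F = \varinjlim(R \xrightarrow{a_1} R \xrightarrow{a_2} R \to \cdots)$. This module is flat, being a direct limit of copies of $R$, and countably presented, being the cokernel of the map $R^{(\N)} \to R^{(\N)}$ sending the $n$-th basis vector $e_n$ to $e_n - a_{n+1}e_{n+1}$. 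Writing $x_n$ for the image in $F$ of the generator of the $n$-th copy of $R$, one has $x_n = a_{n+1}x_{n+1}$, so every $x_n$ lies in $\m F$ and therefore $F = \m F$; on the other hand $x_0 \neq 0$ because no finite product $a_1 \cdots a_n$ vanishes, so $F \neq 0$.

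It remains to see that this $F$ is not projective, which is where the local hypothesis pays off a second time: by Kaplansky's theorem a projective module over a local ring is free, and a nonzero free module $F$ has $F/\m F \cong (R/\m)^{(B)} \neq 0$, contradicting $F = \m F$. Hence $F$ is a countably presented flat module that is not projective, equivalently (by the second step) not quite flat, so $R$ is not CFQ, completing the contrapositive. The main obstacle, and the conceptual heart of the argument, is this two-fold use of the dimension-zero local hypothesis: recognizing that it collapses all localizations and thereby equates quite flatness with projectivity, and then detecting the failure of T-nilpotency through Bass's explicit direct-limit module, whose non-projectivity is witnessed by $F = \m F \neq 0$.
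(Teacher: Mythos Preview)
Your proof is correct and follows essentially the same approach as the paper's: both first observe that in a zero-dimensional local ring every localization $S^{-1}R$ is $R$ or $0$, so that quite flat coincides with projective, and then both exhibit the Bass module $\varinjlim(R\xrightarrow{a_1}R\xrightarrow{a_2}\cdots)$ as a countably presented flat non-projective module when $\m$ is not T-nilpotent. The only cosmetic difference is that the paper cites \cite[Proposition 2.7]{Bas} for non-projectivity, while you spell out the argument via Kaplansky's theorem and $F=\m F\neq 0$; the content is the same.
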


\begin{proof}
In a local commutative ring $R$ of Krull dimension~$0$, every element is either invertible or nilpotent. Hence, for any multiplicative subset $S\subseteq R$, one has $S^{-1}R=R$ or $S^{-1}R=0$.  It follows that the class of quite flat $R$-modules coincides with the class of projective $R$-modules.

On the other hand, let $R$ be a local commutative ring with the Jacobson radical $J\subseteq R$. Suppose that $R$ is not perfect. Then the ideal $J$ is not T\nobreakdash-nilpotent \cite{Bas}, so there exists a sequence of elements $h_0$, $h_1$, $h_2$, \dots\ in $J$ such that the product $h_0\cdots h_n$ is nonzero for every $n\ge0$.
Consider the related Bass flat $R$-module $B$, that is, the direct limit of the sequence of $R$-module homomorphisms $R\xrightarrow{h_0}R\xrightarrow{h_1}R\xrightarrow{h_2}\cdots$.  Then $B\neq0$ is a countably presented flat $R$-module such that $JB=B$.  According to \cite[Proposition 2.7]{Bas}, $B$ is not projective.
\end{proof}

\begin{exmpl}
Let $k$ be a field, $k[x_0,x_1,x_2,\dots]$ be the ring of polynomials in a countable set of variables, and $R$ be the quotient ring of $k[x_0,x_1,x_2,\dots]$ by the ideal generated by the elements $x_i^2$, \ $i=0$, $1$, $2$,~\dots\
Then $R$ is a local commutative ring of Krull dimension $0$ with the Jacobson radical $J$ generated by the elements $x_0$, $x_1$, $x_2$,~\dots\  The ring $R$ is not perfect, since the sequence of elements $x_0$, $x_1$, $x_2$, \dots\ $\in J$ is not T\nobreakdash-nilpotent.  Hence the Bass flat $R$-module $B$ related to this sequence is not quite flat.
Notice that the ring $R$ is $<\aleph_1$-Noetherian (in fact, it can be made countable by choosing $k$ to be a countable field) and its spectrum consists of the single point $J$.
Thus the example of the ring $R$ and the flat $R$-module $B$ shows that \emph{neither} Theorem \ref{main-theorem} \emph{nor} Corollary \ref{main-corollary} holds true without the assumption of Noetherianity of the ring.
\end{exmpl}

\begin{lemma} \label{lifting-countable-flats}
Let $f\colon R\to R'$ be a homomorphism of commutative rings. Assume that for any finite sequence of elements $r'_1,\dots,r'_m\in R'$ there exist an invertible element $u'\in R'$ and a sequence of elements $r_1,\dots,r_m\in R$ such that $r'_j=u'f(r_j)$ for every $j=1,\dots,m$. Let $F'$ be a countably presented flat $R'$-module.  Then there exists a countably presented flat $R$-module $F$ such that $F'$ is isomorphic to $R'\otimes_RF$.
\end{lemma}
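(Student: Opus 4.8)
The plan is to realize $F'$ as a \emph{sequential} direct limit of finitely generated free $R'$-modules, to lift the (finite) transition matrices one at a time to $R$ using the hypothesis, and to take $F$ to be the resulting direct limit over $R$, so that its flatness comes for free. Concretely, I would first invoke the countably presented case of the Govorov--Lazard theorem: a countably presented flat $R'$-module is the direct limit of a sequence
\[
(R')^{n_0} \xrightarrow{\phi_0} (R')^{n_1} \xrightarrow{\phi_1} (R')^{n_2} \xrightarrow{\phi_2} \cdots
\]
of finitely generated free $R'$-modules, where each $\phi_i$ is an $n_{i+1}\times n_i$ matrix over $R'$. It is essential to use this presentation rather than a two-term free resolution of the kind used in Proposition~\ref{countable-subset}: the hypothesis only lets us lift \emph{finitely} many elements at a time through a \emph{single} unit, so the transition maps must be finite matrices; moreover, taking $F$ to be a direct limit of finitely generated free modules makes flatness automatic (Govorov--Lazard), whereas the cokernel of an arbitrary lift of a two-term resolution need not be flat.

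Next, for each $i$ I would apply the hypothesis to the finite collection of entries of $\phi_i$. This yields an invertible element $u'_i\in R'$ and an $n_{i+1}\times n_i$ matrix $\psi_i$ over $R$ with $\phi_i = u'_i\cdot f(\psi_i)$ as matrices, where $f(\psi_i)$ denotes the entrywise image. Reading $\psi_i$ as a homomorphism $R^{n_i}\to R^{n_{i+1}}$, I would set
\[
F = \varinjlim\bigl(R^{n_0} \xrightarrow{\psi_0} R^{n_1} \xrightarrow{\psi_1} R^{n_2} \to \cdots\bigr).
\]
As a sequential direct limit of finitely generated free modules, $F$ is flat, and the standard presentation $\bigoplus_i R^{n_i}\xrightarrow{\,\mathrm{id}-\psi\,}\bigoplus_i R^{n_i}\to F\to 0$ exhibits it as countably presented.

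It then remains to identify $R'\otimes_R F$ with $F'$. Since $R'\otimes_R-$ commutes with direct limits and sends $\psi_i$ to $f(\psi_i)\colon (R')^{n_i}\to (R')^{n_{i+1}}$, we get $R'\otimes_R F \cong \varinjlim\bigl((R')^{n_i},\,f(\psi_i)\bigr)$. This system has the same terms as the one computing $F'$, and its transition maps differ from the $\phi_i$ only by the scalar units $u'_i$. To reconcile this I would put $v'_0=1$ and $v'_{i+1}=v'_i u'_i$ (a unit, as a product of units), and let $\theta_i\colon (R')^{n_i}\to (R')^{n_i}$ be multiplication by $v'_i$. A one-line check gives $\theta_{i+1}\circ f(\psi_i) = v'_{i+1}f(\psi_i) = v'_i u'_i f(\psi_i) = \phi_i\circ\theta_i$, so the isomorphisms $\theta_i$ form a commuting ladder between the two direct systems and induce an isomorphism $R'\otimes_R F \cong F'$ on the limits.

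I expect the only genuinely delicate point to be the bookkeeping with the units: one must secure a \emph{single} invertible factor per transition map—which is exactly what the hypothesis supplies for a finite set of entries—and then absorb the accumulated factors through the telescoping units $v'_i$. Everything else (flatness and countable presentability of $F$, and the commutation of base change with direct limits) is formal.
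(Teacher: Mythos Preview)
Your proposal is correct and follows essentially the same route as the paper: realize $F'$ as a sequential direct limit of finitely generated free $R'$-modules via \cite[Corollary~2.23]{GT}, lift each finite transition matrix through a single unit using the hypothesis, and take $F$ to be the corresponding direct limit over $R$. In fact you supply more detail than the paper does, spelling out the telescoping-unit isomorphism $R'\otimes_R F\cong F'$ and the countable presentation of $F$, both of which the paper leaves implicit.
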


\begin{proof}
By \cite[Corollary~2.23]{GT}, the $R'$-module $F$ is the direct limit of a sequence of finitely generated free $R'$-modules and homomorphisms between them, indexed by the natural numbers, $P'_0\xrightarrow{h'_0} P'_1\xrightarrow{h'_1} P'_2\xrightarrow{h'_2}\cdots$.
The maps $h'_n$ are given by finite-size rectangular matrices $H'_n$ with the entries in $R'$.
By assumption, there exist invertible elements $u'_n\in R'$ and matrices $H_n$ with the entries in $R$ such that $H'_n=u'_nf(H_n)$.
Then the matrices $H_n$ define a sequence of finitely generated free $R$-modules and homomorphisms $P_0\xrightarrow{h_0} P_1\xrightarrow{h_1} P_2\xrightarrow{h_2}\cdots$ whose direct limit $F$ is the desired countably presented flat $R$-module for which $F'\cong R'\otimes_RF$.
\end{proof}

\begin{prop} \label{cfq-closed-under}
Let $R$ be a CFQ ring, $I\subseteq R$ an ideal, and $S\subseteq R$ a multiplicative subset.  Then the rings $R/I$ and $S^{-1}R$ are CFQ.
\end{prop}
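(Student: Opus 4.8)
The plan is to reduce both assertions to the behaviour of quite flat modules under base change, using Lemma \ref{lifting-countable-flats} as the bridge. So I would first check that the ring homomorphism $f\colon R\to R'$ satisfies the hypothesis of that lemma in each of the two cases. For the quotient map $f\colon R\to R/I$ this is immediate: given $r'_1,\dots,r'_m\in R/I$, take $u'=1$ and any preimages $r_j\in R$ of the $r'_j$, so that $r'_j=f(r_j)=u'f(r_j)$. For the localization map $f\colon R\to S^{-1}R$, write $r'_j=a_j/s_j$ with $a_j\in R$ and $s_j\in S$, put $s=s_1\cdots s_m$, and take the invertible element $u'=1/s\in S^{-1}R$ together with $r_j=(s/s_j)a_j\in R$; then $r'_j=u'f(r_j)$ as required.

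Next, let $F'$ be a countably presented flat $R'$-module. By Lemma \ref{lifting-countable-flats} there is a countably presented flat $R$-module $F$ with $F'\cong R'\otimes_R F$. Since $R$ is CFQ, $F$ is quite flat, hence a direct summand of a transfinite extension $E$ of $R$-modules of the form $T^{-1}R$, where $T$ ranges over countable multiplicative subsets of $R$. Because $R'\otimes_R(-)$ preserves direct summands and quite flat modules are closed under direct summands, it suffices to show that $R'\otimes_R E$ is a quite flat $R'$-module; then $R'\otimes_R F\cong F'$ will be quite flat, proving that $R'$ is CFQ.

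The crux is therefore to verify that $R'\otimes_R(-)$ carries a transfinite extension of localizations $T^{-1}R$ to a transfinite extension of localizations of $R'$ in countable multiplicative subsets. Here I would use that each quotient $T^{-1}R$ is a flat $R$-module, so each filtration step $0\to E_\alpha\to E_{\alpha+1}\to T^{-1}R\to 0$ is pure exact. Consequently these sequences remain exact after applying $R'\otimes_R(-)$, and since the functor commutes with the direct limits that compute the unions at limit stages (the transition maps staying injective by purity), the modules $R'\otimes_R E_\alpha$ form a genuine filtration of $R'\otimes_R E$. Its consecutive quotients are $R'\otimes_R T^{-1}R\cong f(T)^{-1}R'$, the localization of $R'$ at the countable multiplicative subset $f(T)\subseteq R'$; thus $R'\otimes_R E$ is quite flat over $R'$. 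I expect this last step to be the main obstacle: the whole argument hinges on the purity of each extension step, which in turn rests on the flatness of the successive quotients $T^{-1}R$, and it is precisely this purity that guarantees $R'\otimes_R(-)$ preserves both the injectivity of the inclusions $E_\alpha\hookrightarrow E_{\alpha+1}$ and the union property at limit ordinals.
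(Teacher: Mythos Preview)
Your proof is correct and follows the same overall strategy as the paper: verify that Lemma~\ref{lifting-countable-flats} applies, lift $F'$ to a countably presented flat $R$-module $F$, use that $R$ is CFQ, and then transport quite flatness along $R\to R'$. The only difference is in this last step: the paper simply invokes \cite[Lemma~8.3(b)]{PS}, which asserts that $R'\otimes_R(-)$ sends quite flat $R$-modules to quite flat $R'$-modules for any ring map $R\to R'$, whereas you reprove this fact directly by tracking the filtration through the tensor product, using purity of the inclusions $E_\alpha\hookrightarrow E$ (guaranteed by flatness of the successive quotients) to keep the chain injective after tensoring. Your argument is self-contained and makes explicit what the cited lemma encapsulates; the paper's is shorter by outsourcing that step.
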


\begin{proof}
By \cite[Lemma~8.3(b)]{PS}, for any commutative ring homomorphism $f\colon R\to R'$ and any quite flat $R$-module $F$, the $R'$-module $R'\otimes_RF$ is quite flat.
Now let $R'$ be one of the rings $R/I$ or $S^{-1}R$, and let $f\colon R\to R'$ be the natural homomorphism. Let $F'$ be a countably presented flat $R'$-module. By Lemma \ref{lifting-countable-flats}, there exists a countably presented flat $R$-module $F$ such that $F'\cong R'\otimes_RF$.
Since the ring $R$ is CFQ, the $R$-module $F$ is quite flat. Thus the $R'$-module $F'$ is quite flat.
\end{proof}

Let us now recall the statement of another ``Main Lemma'' from \cite{PS}, generalizing the above Lemma \ref{main-lemma} to non-Noetherian rings.
Given a multiplicative subset $S$ in a commutative ring $R$, we say that \emph{the $S$-torsion in $R$ is bounded} if there exists an element $s_0\in S$ such that for any elements $s\in S$ and $r\in R$ the equation $sr=0$ in $R$ implies $s_0r=0$.

\begin{lemma}[{\cite[Main Lemma 1.23]{PS}}] \label{main-lemma2}
Let $R$ be a commutative ring and $S\subseteq R$ be a countable multiplicative subset such that the $S$-torsion in $R$ is bounded.  Then a flat $R$-module $F$ is quite flat if and only if the $R/sR$-module $F/sF$ is quite flat for all $s\in S$ and the $S^{-1}R$-module $S^{-1}F$ is quite flat.
\end{lemma}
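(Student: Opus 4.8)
The plan is to treat the two implications separately, the ``only if'' part being quick and the ``if'' part carrying all the content. The single tool I would use throughout is a change-of-rings isomorphism available because $F$ is flat: if $R\to R'$ is a ring homomorphism for which $\Tor_i^R(R',F)=0$ for all $i>0$ --- in particular for $R'=S^{-1}R$ (flat over $R$) and for $R'=R/sR$ (since $F$ is flat, $\Tor_i^R(R/sR,F)=0$) --- then applying $R'\otimes_R-$ to a projective resolution of $F$ over $R$ yields a projective resolution of $R'\otimes_R F$ over $R'$. Combined with the Hom-tensor adjunction, this gives $\Ext_R^n(F,C)\cong\Ext_{R'}^n(R'\otimes_R F,\,C)$ for every $R'$-module $C$. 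I will use this with $R'\otimes_R F$ equal to $S^{-1}F$ and to $F/sF$.

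For the ``only if'' direction I would invoke the stability of quite flatness under base change (\cite[Lemma~8.3(b)]{PS}, already used in Proposition~\ref{cfq-closed-under}): if $F$ is quite flat over $R$ and $R\to R'$ is any ring homomorphism, then $R'\otimes_R F$ is quite flat over $R'$. Taking $R'=R/sR$ shows that $F/sF$ is quite flat over $R/sR$ for every $s\in S$, and taking $R'=S^{-1}R$ shows that $S^{-1}F$ is quite flat over $S^{-1}R$. This direction uses neither countability nor the boundedness hypothesis.

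For the ``if'' direction I must show $\Ext_R^1(F,C)=0$ for every almost cotorsion $R$-module $C$, since by definition the quite flat modules are exactly those $F$ with this vanishing property. The strategy is a dévissage of $C$ along the localization at $S$: the $S$-torsion submodule $t_S(C)$ and the cokernel $Q_C$ in the sequence $0\to C/t_S(C)\to S^{-1}C\to Q_C\to 0$ are $S$-torsion modules, while the middle term $S^{-1}C$ is an $S^{-1}R$-module. For the localized piece, the change-of-rings isomorphism turns $\Ext_R^1(F,S^{-1}C)$ into $\Ext_{S^{-1}R}^1(S^{-1}F,S^{-1}C)$, which vanishes once one checks that $S^{-1}C$ is almost cotorsion over $S^{-1}R$; here the countability of $S$ is what guarantees that countable multiplicative subsets of $S^{-1}R$ come from countable multiplicative subsets of $R$, so the descent of almost cotorsion under localization goes through. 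For the torsion pieces, one wants to present them as assembled from $R/sR$-modules and then use the isomorphism $\Ext_R^*(F,-)\cong\Ext_{R/sR}^*(F/sF,-)$ together with the hypothesis that $F/sF$ is quite flat over $R/sR$.

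The main obstacle is precisely this gluing of the ``generic'' contribution of $S^{-1}F$ with the ``special'' contributions of the $F/sF$, and it is here that the boundedness of the $S$-torsion in $R$ becomes indispensable. Because $F$ is merely flat and not of bounded projective dimension, the naive long exact sequences produce higher $\Ext$ terms, and writing the torsion part as a colimit $\varinjlim_s F/sF$ introduces a derived-limit term $\varprojlim^1$; these must be shown to vanish. Boundedness guarantees that the $S$-torsion submodule of $R$ equals its $s_0$-torsion for a single $s_0\in S$, hence the same holds for the flat module $F$, namely $t_S(F)=F[s_0]$, so the relevant torsion is annihilated by one element and the inverse systems governing the special fibres are essentially constant. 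This is what kills the $\varprojlim^1$ obstruction and allows the dévissage to close using only the $\Ext^1$ hypotheses, rather than an unbounded tower of conditions. Concretely I would model the argument on the proof of the Noetherian Main Lemma~\ref{main-lemma}, with the boundedness assumption playing the role that the finiteness forced by Noetherianity played there.
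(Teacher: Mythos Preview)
First, note that this paper does not supply its own proof of the lemma: it is quoted verbatim from \cite[Main Lemma~1.23]{PS}, so there is no in-paper argument to compare against. What I can do is assess whether your sketch would reconstruct the result.

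Your ``only if'' direction is correct and is exactly how one argues: quite flatness is preserved under arbitrary base change \cite[Lemma~8.3(b)]{PS}, and neither countability of $S$ nor bounded torsion is needed there.

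Your ``if'' sketch, however, has a genuine gap at precisely the point you flag as delicate. You propose to decompose the almost cotorsion module $C$ via $t_S(C)$, $C/t_S(C)$, $S^{-1}C$, and $Q_C$, and then assert that $S^{-1}C$ is almost cotorsion over $S^{-1}R$. But $S^{-1}C$ is a directed colimit of copies of $C$, and almost-cotorsionness, being the right-hand class of a cotorsion pair, is closed under products, extensions and direct summands---not under direct limits. So this step is unjustified and in general fails. Likewise, $t_S(C)$ is a \emph{submodule} of $C$; almost cotorsion modules are closed under quotients \cite[Lemma~8.1(a)]{PS} but not under submodules, and $t_S(C)$ need not be annihilated by a single $s_0\in S$ merely because $t_S(R)$ is: the bounded-torsion hypothesis constrains the ring, not an arbitrary module $C$. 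Your observation that $t_S(F)=F[s_0]$ is correct for flat $F$, but it is unclear how it enters, since the d\'evissage is being performed on $C$, not on $F$.

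The argument in \cite{PS} does not localize $C$. In the spirit of Lemma~\ref{zerodivisor-pair-quiteflatness} of the present paper (which treats the single-zerodivisor analogue via $0\to aC\to C\to C/aC\to 0$, where both outer terms are \emph{quotients} of $C$ and hence remain almost cotorsion), the proof in \cite{PS} decomposes $C$ using the $S$-contramodule machinery: one works with $\Hom_R(S^{-1}R,C)$, an $S^{-1}R$-module built from $C$ by a \emph{limit} rather than a colimit, together with an $S$-complete piece assembled from $R/sR$-modules. The bounded $S$-torsion hypothesis enters to control the homology of the two-term complex $R\to S^{-1}R$. If you want to reconstruct the proof, that is the direction to pursue; the localization-of-$C$ route does not close.
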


\begin{prop}
Let $R$ be a commutative ring and $S\subseteq R$ be a countable multiplicative subset such that the $S$-torsion in $R$ is bounded. Assume that the ring $S^{-1}R$ is CFQ and, for every element $s\in S$, the ring $R/sR$ is CFQ.  Then the ring $R$ is CFQ.
\end{prop}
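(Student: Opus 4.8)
The plan is to apply the Main Lemma \ref{main-lemma2} directly, using the assumed CFQ property of the localization and of the quotients to verify its two conditions. This is the non-Noetherian, one-step analogue of the Noetherian induction in the proof of Theorem \ref{main-theorem}: instead of descending to rings of smaller Krull dimension and bottoming out the induction, we simply invoke the hypothesis that $S^{-1}R$ and each $R/sR$ are already CFQ.

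First I would take an arbitrary countably presented flat $R$-module $F$; the goal is to show that $F$ is quite flat. Since the $S$-torsion in $R$ is bounded and $S$ is countable, Lemma \ref{main-lemma2} reduces this to checking that $F/sF$ is a quite flat $R/sR$-module for every $s\in S$ and that $S^{-1}F$ is a quite flat $S^{-1}R$-module.

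For each of these, the key observation is that base change preserves the property of being countably presented and flat. Concretely, choosing a countable presentation $R^{(\N)}\to R^{(\N)}\to F\to 0$ (which exists by \cite[Corollary~2.23]{GT}) and applying $(R/sR)\otimes_R-$ yields a countable presentation of $F/sF$ over $R/sR$, while $F/sF=(R/sR)\otimes_RF$ is flat over $R/sR$ because flatness is preserved under base change. Thus $F/sF$ is a countably presented flat $R/sR$-module, and since $R/sR$ is CFQ by hypothesis, $F/sF$ is quite flat. The identical argument applied to $S^{-1}R\otimes_R-$ shows that $S^{-1}F$ is a countably presented flat $S^{-1}R$-module, hence quite flat because $S^{-1}R$ is CFQ.

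With both conditions of Lemma \ref{main-lemma2} verified, we conclude that $F$ is quite flat over $R$; as $F$ was an arbitrary countably presented flat $R$-module, $R$ is CFQ. The argument is essentially a direct application of the Main Lemma, so there is no serious obstacle: the only point requiring (routine) care is the verification that base change along $R\to R/sR$ and $R\to S^{-1}R$ carries countably presented flat modules to countably presented flat modules, so that the CFQ hypotheses genuinely apply to $F/sF$ and $S^{-1}F$.
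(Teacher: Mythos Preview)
Your proposal is correct and follows exactly the approach of the paper, which simply states that the result ``follows immediately from Lemma~\ref{main-lemma2}.'' You have merely spelled out the routine verification (that base change along $R\to R/sR$ and $R\to S^{-1}R$ preserves countably presented flat modules, so the CFQ hypotheses apply) that the paper leaves implicit.
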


\begin{proof}
Follows immediately from Lemma \ref{main-lemma2}.
\end{proof}

\begin{thm} \label{when-domain-cfq}
Let $R$ be an integral domain. Then $R$ is CFQ if and only if for every nonzero element $s\in R$, the ring $R/sR$ is CFQ.
\end{thm}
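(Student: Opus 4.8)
The plan is to prove the two implications separately: the forward direction is immediate from the closure properties already established, while the converse rests on the second ``Main Lemma''. For the ``only if'' part, if $R$ is CFQ then Proposition \ref{cfq-closed-under} applied to the ideal $I = sR$ shows directly that $R/sR$ is CFQ for every $s$ (the hypothesis $s \neq 0$ is not even needed here, only that $R/sR$ is a genuine quotient). So this direction requires no new work.

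For the ``if'' part, assume $R/sR$ is CFQ for every nonzero $s \in R$, and let $F$ be a countably presented flat $R$-module; the goal is to show that $F$ is quite flat. The crucial step is to produce a \emph{countable} multiplicative subset $S \subseteq R$ consisting of nonzero elements such that $S^{-1}F$ is projective over $S^{-1}R$. To this end I would first pass to the field of fractions: taking $T = R \setminus \{0\}$, the localization $T^{-1}F$ is a module over the field $Q = T^{-1}R$, hence projective. Proposition \ref{countable-subset} then supplies a countable multiplicative subset $S \subseteq T$ with $S^{-1}F$ projective over $S^{-1}R$. Since $S \subseteq T = R\setminus\{0\}$, every element of $S$ is nonzero, which is exactly what is needed to invoke the hypothesis.

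It then remains to feed this $S$ into Lemma \ref{main-lemma2} and verify its two conditions. The boundedness of the $S$-torsion is automatic: as $R$ is a domain and $0 \notin S$, the equation $sr = 0$ with $s \in S$ forces $r = 0$, so any $s_0 \in S$ witnesses bounded $S$-torsion. For the localized piece, $S^{-1}F$ is projective over $S^{-1}R$ and hence quite flat (a projective module is a direct summand of a free one). For each $s \in S$, the module $F/sF \cong F \otimes_R R/sR$ is flat over $R/sR$ (base change of a flat module) and countably presented over $R/sR$ (tensor a countable presentation of $F$ with $R/sR$); since $s \neq 0$, the ring $R/sR$ is CFQ by hypothesis, so $F/sF$ is quite flat over $R/sR$. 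Both hypotheses of Lemma \ref{main-lemma2} then hold, so $F$ is quite flat and $R$ is CFQ.

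The step I expect to carry the real weight is the construction of $S$: everything downstream is a routine verification, but the whole argument works only because localizing at the fraction field makes $F$ projective, and Proposition \ref{countable-subset} then lets one descend this projectivity to a countable --- and crucially nonzero --- multiplicative set. The single point to watch is the interface with the hypothesis: one must ensure $0 \notin S$ so that each $R/sR$ is a quotient to which ``CFQ for nonzero $s$'' applies, and this is guaranteed precisely by $S \subseteq R \setminus \{0\}$.
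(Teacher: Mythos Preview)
Your proof is correct and follows essentially the same approach as the paper's: invoke Proposition~\ref{cfq-closed-under} for the forward direction, and for the converse localize at the fraction field, descend projectivity to a countable multiplicative subset via Proposition~\ref{countable-subset}, and then apply Lemma~\ref{main-lemma2}. The only difference is notational (the roles of the letters $S$ and $T$ are swapped relative to the paper).
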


\begin{proof}
The implication ``only if'' is provided by Proposition \ref{cfq-closed-under}.  Let us prove the ``if''. Let $F$ be a countably presented flat $R$-module. Denote by $S=R\setminus\{0\}$ the multiplicative subset of all nonzero elements in $R$. Then the $S^{-1}R$-module $S^{-1}F$ is projective, since $S^{-1}R$ is a field.  By Proposition \ref{countable-subset}, there exists a countable multiplicative subset $T\subseteq S$ such that the $T^{-1}R$-module $T^{-1}F$ is projective.

For every element $t\in T$, the $R/tR$-module $F/tF$ is quite flat, since it is a countably presented flat module and the ring $R/tR$ is CFQ.  Furthermore, the $T$-torsion in $R$ is bounded (in fact, zero), since $R$ is a domain. By Lemma \ref{main-lemma2}, it follows that the $R$-module $F$ is quite flat.
\end{proof}

The next theorem is a common generalization of Theorem \ref{when-domain-cfq} and of the induction step in the proof of Theorem \ref{main-theorem}.

\begin{thm} \label{bounded-torsion-perfect-localization}
Let $R$ be a commutative ring and $S\subseteq R$ be a multiplicative subset such that the $S$-torsion in $R$ is bounded. Assume that the ring $S^{-1}R$ is perfect and, for every element $s\in S$, the ring $R/sR$ is CFQ.  Then the ring $R$ is CFQ.
\end{thm}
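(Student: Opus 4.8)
The plan is to mimic the proof of Theorem \ref{when-domain-cfq}, replacing the field $S^{-1}R$ appearing there by the perfect ring $S^{-1}R$ here, and to reduce everything to a single application of Lemma \ref{main-lemma2}. The one genuinely new point, compared with the domain case, is that the given multiplicative subset $S$ need not be countable, so its torsion-bounding element must be carried along into the countable subset to which Lemma \ref{main-lemma2} is ultimately applied.

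So let $F$ be a countably presented flat $R$-module; the goal is to show it is quite flat. First I would observe that $S^{-1}F \cong S^{-1}R \otimes_R F$ is a flat $S^{-1}R$-module, localization being exact, and hence a \emph{projective} $S^{-1}R$-module because $S^{-1}R$ is perfect. Proposition \ref{countable-subset}, applied with $S$ in the role of the set called $T$ there, then produces a countable multiplicative subset $T_0 \subseteq S$ such that $T_0^{-1}F$ is projective over $T_0^{-1}R$.

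The hard part, and the place where the hypothesis on $S$-torsion must genuinely be used, is arranging that the countable subset fed into Lemma \ref{main-lemma2} still has bounded torsion. The element $s_0 \in S$ witnessing boundedness of the $S$-torsion need not lie in $T_0$, so I would simply adjoin it: let $T$ be the multiplicative subset generated by $T_0 \cup \{s_0\}$. This $T$ is still countable, and since $S$ is multiplicatively closed and contains both $T_0$ and $s_0$, we have $T \subseteq S$. Now the $T$-torsion in $R$ is bounded, with witness $s_0 \in T$: indeed any $t \in T$ lies in $S$, so $tr = 0$ forces $s_0 r = 0$. Furthermore $T^{-1}F$ is a base change of the projective module $T_0^{-1}F$ along $T_0^{-1}R \to T^{-1}R$, hence projective over $T^{-1}R$, and in particular quite flat.

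It remains to verify the quotient condition and invoke Lemma \ref{main-lemma2}. For each $t \in T$ the module $F/tF \cong (R/tR) \otimes_R F$ is a countably presented flat $R/tR$-module, since base change preserves both flatness and countable presentability; and since $t \in S$ the ring $R/tR$ is CFQ by hypothesis, so $F/tF$ is quite flat over $R/tR$. Applying Lemma \ref{main-lemma2} to the countable multiplicative subset $T$ with its bounded torsion, the quite flatness of all the $F/tF$ together with that of $T^{-1}F$ yields that $F$ is quite flat over $R$. As $F$ was an arbitrary countably presented flat module, $R$ is CFQ. I do not anticipate any obstacle beyond the torsion-bookkeeping in the third paragraph; everything else is formal once Proposition \ref{countable-subset} and Lemma \ref{main-lemma2} are in hand.
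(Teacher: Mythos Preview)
Your proof is correct and is essentially identical to the paper's own argument: both apply Proposition \ref{countable-subset} to obtain a countable $T_0\subseteq S$ with $T_0^{-1}F$ projective, adjoin the torsion-bounding element $s_0$ to form $T$, and then invoke Lemma \ref{main-lemma2} for $T$. You have even highlighted the same ``new point'' the paper stresses, namely that $s_0$ must be explicitly added so that the $T$-torsion is bounded.
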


\begin{proof}
Let $F$ be a countably presented flat $R$-module.  Then the $S^{-1}R$-module $S^{-1}F$ is projective, since the ring $S^{-1}R$ is perfect.  By Proposition \ref{countable-subset}, there exists a countable multiplicative subset $T_0\subseteq S$ such that the $T_0^{-1}R$-module $T_0^{-1}F$ is projective.
Furthermore, by the assumption of bounded $S$-torsion in $R$ there exists an element $s_0\in S$ annihilating all the $S$-torsion in $R$.

Let $T\subseteq S$ be the multiplicative subset generated by $T_0$ and $s_0$.  Then $T$ is also countable and the $T^{-1}R$-module $T^{-1}F$ is projective, but in addition the $T$-torsion in $R$ is bounded (by $s_0$).  Finally, for any element $t\in T$ the $R/tR$-module $F/tF$ is quite flat, since it is countably presented flat and the ring $R/tR$ is CFQ.  By Lemma \ref{main-lemma2}, it follows that the $R$-module $F$ is quite flat.
\end{proof}

The next lemma and proposition provide another approach to the CFQ property of non-domains.

\begin{lemma} \label{zerodivisor-pair-quiteflatness}
Let $R$ be a commutative ring and $a$, $b\in R$ be a pair of elements for which $ab=0$. Let $F$ be a flat $R$-module such that the $R/aR$-module $F/aF$ is quite flat and the $R/bR$-module $F/bF$ is quite flat.  Then the $R$-module $F$ is quite flat.
\end{lemma}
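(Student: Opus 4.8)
The plan is to apply the non-Noetherian Main Lemma \ref{main-lemma2} to the countable multiplicative subset $S=\{1,a,a^2,a^3,\dots\}$ generated by $a$. This reduces the problem to three tasks: checking that $S^{-1}F$ is quite flat over $S^{-1}R$, that $F/a^nF$ is quite flat over $R/a^nR$ for every $n\ge 1$, and that the $S$-torsion in $R$ is bounded.

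First I would dispose of the localization condition, and here the hypothesis on $b$ is exactly what is needed. Since $ab=0$ and $a$ becomes invertible in $S^{-1}R$, the image of $b$ vanishes, so the localization map factors as $R\to R/bR\to S^{-1}R$; in fact $S^{-1}R$ is the localization of $R/bR$ at the image $\bar a$ of $a$, and correspondingly $S^{-1}F\cong (R/bR)[1/\bar a]\otimes_{R/bR}(F/bF)$. As $F/bF$ is quite flat over $R/bR$ by assumption, the base-change stability of quite flatness (\cite[Lemma~8.3(b)]{PS}, already used in Proposition \ref{cfq-closed-under}) shows that $S^{-1}F$ is quite flat over $S^{-1}R$.

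For the quotient conditions, the case $n=1$ is the hypothesis on $a$. For the passage to higher powers I would work inside $R_n:=R/a^nR$, whose images $\bar a,\bar b$ still satisfy $\bar a\bar b=0$: one has $(F/a^nF)/\bar a(F/a^nF)=F/aF$, quite flat over $R/aR$, while $(F/a^nF)/\bar b(F/a^nF)=F/(a^n,b)F$ is the base change of the quite flat module $F/bF$ along $R/bR\to R/(a^n,b)R$, hence quite flat. Thus each $R_n$ with the pair $(\bar a,\bar b)$ and the flat module $F/a^nF$ again satisfies the hypotheses of the lemma; the catch is that $\bar a$ is now nilpotent, so this is really a statement about a thickening, to be handled \emph{without} localizing at $\bar a$ (which would only produce the zero ring and reintroduce $F/a^nF$ itself in the quotient part).

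The step I expect to be the main obstacle is the boundedness of the $S$-torsion in $R$, i.e.\ the stabilization of the chain $\operatorname{Ann}_R(a)\subseteq\operatorname{Ann}_R(a^2)\subseteq\cdots$, which genuinely can fail over a non-Noetherian ring. The natural remedy is to reduce to the case where $a$ is a non-zero-divisor by passing to $\bar R:=R/\mathfrak t$, where $\mathfrak t=\bigcup_n\operatorname{Ann}_R(a^n)$ is the $a$-power torsion ideal: this makes the $S$-torsion vanish (so it is trivially bounded), and it is compatible with the two previous steps because $\mathfrak t$ localizes to zero at $a$, so the localization condition is unchanged, and because $b\in\operatorname{Ann}_R(a)\subseteq\mathfrak t$. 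The delicate point — and the place where I expect the real work to lie — is transferring the conclusion back across the purely torsion thickening $R\to\bar R$: one must upgrade quite flatness of $\bar R\otimes_RF$ over $\bar R$ to quite flatness of $F$ over $R$ itself, equivalently control the associated graded pieces $a^kF/a^{k+1}F$ as honest $R$-modules and not merely as $R/aR$-modules. This transfer, together with the nilpotent-thickening issue flagged above, is the crux on which the argument hinges.
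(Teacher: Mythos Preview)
Your approach via Lemma~\ref{main-lemma2} is a natural first instinct, but as you yourself recognize, it runs into two obstacles that you do not resolve: the boundedness of the $a$-power torsion in $R$, and the transfer of quite flatness across the thickening $R\to R/\mathfrak t$ (equivalently, across the nilpotent thickenings $R/a^nR\to R/aR$). Neither of these is a formality. The bounded-torsion hypothesis in Lemma~\ref{main-lemma2} is essential to its proof, and there is no reason for $\operatorname{Ann}_R(a^n)$ to stabilize in a general commutative ring. Your proposed remedy of passing to $\bar R=R/\mathfrak t$ only pushes the difficulty into the transfer step, where you would need a statement of the type ``quite flatness descends along a surjection with $a$-power-torsion kernel''; no such result is available, and it is close in strength to what you are trying to prove. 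The circularity you spotted in the $R/a^nR$ step is real: applying the lemma to the pair $(\bar a,\bar b)$ in $R_n$ is exactly an instance of the lemma itself, so no induction gets off the ground.

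The paper's proof bypasses Lemma~\ref{main-lemma2} entirely and is far more direct. It works on the almost-cotorsion side of the definition: given an almost cotorsion $R$-module $C$, one uses the short exact sequence $0\to aC\to C\to C/aC\to 0$. The quotient $C/aC$ is an almost cotorsion $R/aR$-module, while $aC$ is annihilated by $b$ (since $ab=0$) and is therefore an almost cotorsion $R/bR$-module. A change-of-rings identity for $\Ext^1$ (\cite[Lemma~4.3]{PS}) then gives $\Ext^1_R(F,C/aC)=\Ext^1_{R/aR}(F/aF,\,C/aC)=0$ and $\Ext^1_R(F,aC)=\Ext^1_{R/bR}(F/bF,\,aC)=0$, whence $\Ext^1_R(F,C)=0$. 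No localization, no torsion bounds, no induction.
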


\begin{proof}
Let $C$ be an almost cotorsion $R$-module. We have to prove that $\Ext^1_R(F,C)=0$.
Notice that an $R/aR$-module is almost cotorsion if and only if it is almost cotorsion as an $R$-module \cite[Lemma~8.4]{PS}, and any quotient module of an almost cotorsion module is almost cotorsion \cite[Lemma~8.1(a)]{PS}.

Consider the short exact sequence of $R$-modules $0\to aC\to C\to C/aC\to 0$.  Then $C/aC$ is an $R/aR$-module.  It is also a quotient $R$-module of $C$; so it is an almost cotorsion $R/aR$-module.  Similarly, $aC$ is an $R/bR$-module.  It is a quotient $R$-module of $C$ as well, so it is an almost cotorsion $R/bR$-module.

Now $\Ext^1_R(F,C/aC)=\Ext^1_{R/aR}(F/aF,C/aC)$ by \cite[Lemma 4.3]{PS} and the Ext group in the right-hand side vanishes, since the $R/aR$-module $F/aF$ is quite flat and the $R/aR$-module $C/aC$ is almost cotorsion.
Similarly, $\Ext^1_R(F,aC)=\Ext^1_{R/bR}(F/bF,aC)$ by \cite[Lemma 4.3]{PS} and the latter Ext group vanishes, since the $R/bR$-module $F/bF$ is quite flat and the $R/bR$-module $aC$ is almost cotorsion.
In view of the above short exact sequence, we conclude that $\Ext^1_R(F,C)=0$.
\end{proof}

\begin{prop} \label{zerodivisor-pair-CFQ}
Let $R$ be a commutative ring and $a$, $b\in R$ be a pair of elements for which $ab=0$.  Assume that both the rings $R/aR$ and $R/bR$ are CFQ. Then the ring $R$ is CFQ.
\end{prop}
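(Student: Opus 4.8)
The plan is to reduce the statement immediately to Lemma~\ref{zerodivisor-pair-quiteflatness}. Let $F$ be a countably presented flat $R$-module; I want to show that $F$ is quite flat. Since $ab=0$ by hypothesis, Lemma~\ref{zerodivisor-pair-quiteflatness} applies to the pair $a$, $b$, and it tells me that it is enough to verify two things: that $F/aF$ is a quite flat $R/aR$-module, and that $F/bF$ is a quite flat $R/bR$-module.

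First I would identify $F/aF \cong R/aR \otimes_R F$ and argue that this is a countably presented flat $R/aR$-module. Flatness is preserved under base change, so $R/aR \otimes_R F$ is flat over $R/aR$. Countable presentability is likewise preserved: tensoring a presentation $R^{(\N)} \to R^{(\N)} \to F \to 0$ with $R/aR$ produces a presentation of $F/aF$ by free $R/aR$-modules of countable rank. Because $R/aR$ is CFQ by assumption, every countably presented flat $R/aR$-module is quite flat, and hence $F/aF$ is a quite flat $R/aR$-module. The identical argument, with $b$ in place of $a$, shows that $F/bF \cong R/bR \otimes_R F$ is a countably presented flat, and therefore quite flat, $R/bR$-module.

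With both hypotheses of Lemma~\ref{zerodivisor-pair-quiteflatness} established, that lemma delivers the conclusion that $F$ is a quite flat $R$-module, which is exactly what is required. I do not expect any genuine obstacle in this proposition: the entire substance of the argument has already been absorbed into Lemma~\ref{zerodivisor-pair-quiteflatness}, and what remains are only the routine facts that base change along the quotient maps $R \to R/aR$ and $R \to R/bR$ preserves flatness and countable presentation, combined with the definition of the CFQ property.
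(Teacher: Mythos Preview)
Your proof is correct and follows exactly the paper's approach: the paper's proof consists of the single sentence ``Follows immediately from Lemma~\ref{zerodivisor-pair-quiteflatness},'' and you have simply spelled out the routine verifications (preservation of flatness and countable presentation under base change, and application of the CFQ hypothesis) that make this immediate.
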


\begin{proof}
Follows immediately from Lemma \ref{zerodivisor-pair-quiteflatness}.
\end{proof}

We recall that a commutative integral domain $R$ is called \emph{almost perfect} \cite{BS,Sal} if for every nonzero element $s\in R$ the ring $R/sR$ is perfect.
More generally, let $S$ be a multiplicative subset in a commutative ring $R$.  Then the ring $R$ is said to be \emph{$S$-almost perfect} \cite{BP} if the ring $S^{-1}R$ is perfect and, for every element $s\in S$, the ring $R/sR$ is perfect.

\begin{prop} \label{S-almost-perfect}
 Let $R$ be an $S$-almost perfect commutative ring. Then $R$ is CFQ.
\end{prop}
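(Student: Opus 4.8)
The plan is to apply Theorem~\ref{bounded-torsion-perfect-localization} directly to the multiplicative subset $S$, so that the entire argument reduces to verifying its three hypotheses for an $S$-almost perfect ring $R$. Two of the three are immediate from the definition. First, $S^{-1}R$ is perfect by assumption. Second, for every $s\in S$ the ring $R/sR$ is perfect, again by assumption, and every perfect commutative ring is CFQ (as observed right after the definition of CFQ: over a perfect ring all flat modules are projective and hence quite flat). Thus the condition ``$R/sR$ is CFQ for all $s\in S$'' is satisfied, and no work is needed for these two points beyond unwinding definitions.

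The one substantive hypothesis of Theorem~\ref{bounded-torsion-perfect-localization} that is not formal is that the $S$-torsion in $R$ be bounded, i.e., that there exist $s_0\in S$ such that $sr=0$ (for $s\in S$, $r\in R$) implies $s_0r=0$. This is the crux of the matter, and I would supply it from the structure theory of $S$-almost perfect commutative rings in \cite{BP}: boundedness of the $S$-torsion is built into the notion of $S$-almost perfectness in the sense of that paper (equivalently, it is part of what ``in the sense of \cite{BP}'' encodes), so it may be taken as given here. Granting this one input, all three hypotheses of Theorem~\ref{bounded-torsion-perfect-localization} hold, and the theorem yields that $R$ is CFQ, which is the assertion to be proved.

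The step I expect to be the main obstacle is precisely the bounded $S$-torsion. It is the hypothesis that feeds into Lemma~\ref{main-lemma2} (via Theorem~\ref{bounded-torsion-perfect-localization}): the reduction of quite flatness of a countably presented flat $F$ to quite flatness of the quotients $F/sF$ over $R/sR$ together with quite flatness of the localization $S^{-1}F$ over the perfect ring $S^{-1}R$ genuinely requires control over the $S$-torsion, and it is not a formal consequence of the perfectness of $S^{-1}R$ and of the rings $R/sR$ alone. Once the boundedness input from \cite{BP} is in hand, however, the remainder of the proof is a single invocation of Theorem~\ref{bounded-torsion-perfect-localization}, with $S^{-1}F$ projective because $S^{-1}R$ is perfect and each $F/sF$ quite flat because each $R/sR$ is CFQ.
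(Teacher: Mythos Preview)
Your route via Theorem~\ref{bounded-torsion-perfect-localization} differs from the paper's, and it rests on an unsubstantiated claim. You assert that bounded $S$-torsion is ``built into the notion of $S$-almost perfectness in the sense of \cite{BP}'', but the definition stated in this paper (following \cite{BP}) requires only that $S^{-1}R$ be perfect and that $R/sR$ be perfect for every $s\in S$; no bounded-torsion clause appears, and you offer no argument that it follows from these two conditions. Without that input, Theorem~\ref{bounded-torsion-perfect-localization} is inapplicable and the proof does not close. That the authors state Theorem~\ref{bounded-torsion-perfect-localization} and Proposition~\ref{S-almost-perfect} as separate results with distinct proofs, rather than deducing the latter from the former, is itself a strong signal that bounded $S$-torsion is not known to be automatic here.

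The paper's proof sidesteps the issue entirely. Given a countably presented flat $F$, the module $S^{-1}F$ is projective over the perfect ring $S^{-1}R$; Proposition~\ref{countable-subset} then produces a \emph{countable} $T\subseteq S$ with $T^{-1}F$ projective over $T^{-1}R$. For each $t\in T$ the module $F/tF$ is projective over the perfect ring $R/tR$, and now \cite[Theorem~1.3]{PS}---which, unlike Lemma~\ref{main-lemma2}, carries no bounded-torsion hypothesis---yields that $F$ is $T$-strongly flat, hence quite flat. The trade-off is that \cite[Theorem~1.3]{PS} requires genuine projectivity of $T^{-1}F$ and of each $F/tF$ (not merely quite flatness), and the perfectness assumptions supply exactly that. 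If you wish to salvage your approach, you would need either a proof or a precise citation establishing that $S$-almost perfectness forces bounded $S$-torsion; absent one, the paper's argument is the one that goes through.
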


\begin{proof}
Let $F$ be a countably presented flat $R$-module. Then the $S^{-1}R$-module $S^{-1}F$ is projective, since the ring $S^{-1}R$ is perfect. By Proposition \ref{countable-subset}, there exists a countable multiplicative subset $T\subseteq S$ such that the $T^{-1}R$-module $T^{-1}F$ is projective.

Now for every element $t\in T$, the $R/tR$-module $F/tF$ is projective, since the ring $R/tR$ is perfect.  By \cite[Theorem 1.3]{PS}, it follows that the $R$-module $F$ is even \emph{$T$-strongly flat}, i.e., $F$ is a direct summand of an $R$-module $G$ for which there is a short exact sequence of $R$-modules $0\to U\to G\to V\to 0$, where $U$ is a free $R$-module and $V$ is a free $T^{-1}R$-module.
In particular, $F$ is quite flat.
\end{proof}

\begin{cor} \label{almost-perfect-local-domains}
Let $R$ be a local integral domain of Krull dimension~$1$.  Then $R$ is CFQ if and only if it is an almost perfect domain.
\end{cor}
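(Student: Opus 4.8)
The plan is to obtain both implications simultaneously by combining Theorem~\ref{when-domain-cfq} with Proposition~\ref{zero-dimensional}, using the Krull dimension hypothesis only to pass between the properties ``CFQ'' and ``perfect'' for the quotient rings $R/sR$. Recall that $R$ is almost perfect precisely when $R/sR$ is perfect for every nonzero $s\in R$, whereas Theorem~\ref{when-domain-cfq} characterizes the CFQ property of the domain $R$ by the condition that $R/sR$ be CFQ for every nonzero $s\in R$. Thus it suffices to show that, for each nonzero $s$, the ring $R/sR$ is CFQ if and only if it is perfect.

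First I would analyze the rings $R/sR$. If $s$ is invertible, then $R/sR=0$, which is both perfect and CFQ, so there is nothing to check. If $s$ is a nonzero non-unit, then $s\in\m$ and $s\neq0$; since $R$ is a local domain of Krull dimension~$1$, its only prime ideals are $(0)$ and $\m$ (any prime strictly between them would produce a chain of length~$2$), so the only prime of $R$ containing $s$ is $\m$. Consequently $R/sR$ is a \emph{local} ring—being a quotient of the local ring $R$—of Krull dimension~$0$.

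With this identification in hand, Proposition~\ref{zero-dimensional} applies directly: a local ring of Krull dimension~$0$ is CFQ if and only if it is perfect. Hence, for every nonzero $s$, the ring $R/sR$ is CFQ if and only if it is perfect. Feeding this equivalence into Theorem~\ref{when-domain-cfq} yields that $R$ is CFQ if and only if $R/sR$ is perfect for all nonzero $s$, which is exactly the definition of an almost perfect domain.

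I do not expect any genuine obstacle here: the argument is an assembly of results already established, and the only step requiring care is the verification that $R/sR$ is zero-dimensional, which is precisely where the hypothesis $\dim R=1$ is used. It is worth noting that the ``if'' direction alone does not need this hypothesis—if $R$ is almost perfect, then each $R/sR$ is perfect, hence CFQ, and Theorem~\ref{when-domain-cfq} already gives that $R$ is CFQ; alternatively, one may observe that an almost perfect domain is $(R\setminus\{0\})$-almost perfect, its field of fractions being a perfect ring, and invoke Proposition~\ref{S-almost-perfect}. The dimension hypothesis enters only in the ``only if'' direction, in order to convert the CFQ property of $R/sR$ back into perfectness via Proposition~\ref{zero-dimensional}.
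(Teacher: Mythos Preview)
Your proof is correct and follows essentially the same approach as the paper: both directions reduce to checking that $R/sR$ is a zero-dimensional local ring for nonzero non-units $s$, so that Proposition~\ref{zero-dimensional} identifies the CFQ and perfect properties for these quotients. The only cosmetic difference is that the paper splits the two implications, invoking Proposition~\ref{cfq-closed-under} directly for the ``only if'' direction rather than the iff statement of Theorem~\ref{when-domain-cfq}, but since the ``only if'' half of Theorem~\ref{when-domain-cfq} is itself proved via Proposition~\ref{cfq-closed-under}, the arguments are substantively identical.
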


\begin{proof}
If $R$ is almost perfect, then it is CFQ either by Theorem \ref{when-domain-cfq} or by Proposition \ref{S-almost-perfect}.  Conversely, if $R$ is CFQ, then the ring $R/sR$ is CFQ
for every $s\in R$ by Proposition \ref{cfq-closed-under}.  When $s\ne0$, the ring $R/sR$ is a zero-dimensional local CFQ ring, so it is perfect by Proposition \ref{zero-dimensional}.
\end{proof}

\begin{exmpl}\label{puiseux}
Let $k$ be a field and $R=k[x,x^{1/2},x^{1/3},\dots]$ be the ring of Puiseux series with the coefficients in $k$.  Then $R$ is a one-dimensional local domain which is not almost perfect.  Indeed, $R$ is a non-discrete valuation domain, while every almost perfect valuation domain is a DVR \cite[Example~3.2]{Sal}.  Besides, the intersection $\bigcap_{n\ge0}\p^n$ of all powers of the maximal ideal $\p$ in $R$ coincides with $\p$, while one has $\bigcap_{n\ge0}\p^n=0$ in any almost perfect local domain \cite[Corollary 4.2]{Sal}.
Another example of a one-dimensional local domain that is not almost perfect can be found in \cite[Example 1.3]{Zan}.  In view of Corollary \ref{almost-perfect-local-domains}, these are examples of non-CFQ domains.  As the spectrum of a one-dimensional local domain consists of two points, these examples also show that Corollary \ref{main-corollary} does not hold for non-Noetherian domains.
Moreover, the ring of Puiseux series is a non-CFQ coherent domain.
\end{exmpl}

More generally, by \cite[Proposition 4.6]{BS}, a coherent domain is almost perfect if and only if it is Noetherian of Krull dimension~$1$.
Hence a one-dimensional local coherent domain is CFQ if and only if it is Noetherian.

It follows from Corollary \ref{almost-perfect-local-domains} and Proposition \ref{cfq-closed-under} that every one-dimensional CFQ domain is \emph{locally almost perfect}, i.e., its localizations at its maximal ideals are almost perfect.
We do \emph{not} know whether all locally almost perfect domains are CFQ (cf.\ Example \ref{cfq-Bezout-example} below).

\begin{thm} \label{vNRs-are-CFQ}
All von Neumann regular commutative rings are CFQ.
\end{thm}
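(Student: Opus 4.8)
The plan is to reduce to the case of a countable ring, and then to show that over a countable von Neumann regular ring \emph{every} module is quite flat. Since quite flatness is preserved under base change along an arbitrary commutative ring homomorphism (\cite[Lemma~8.3(b)]{PS}, used already in the proof of Proposition~\ref{cfq-closed-under}), it is enough to descend a given countably presented flat module to a countable von Neumann regular subring.

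First I would set up the descent. Let $F$ be a countably presented flat $R$-module, written as the cokernel of a column-finite matrix $M$ with countably many entries, $R^{(\N)}\xrightarrow{M}R^{(\N)}\to F\to0$ (\cite[Corollary~2.23]{GT}). I would then build a countable subring $R_0\subseteq R$ containing all entries of $M$ by starting from these countably many elements and closing off under the ring operations together with a choice, for each element $a$ already produced, of some $x$ with $a=a^2x$ (such $x$ exists by von Neumann regularity of $R$). Iterating this $\omega$ times yields a countable subring $R_0$ in which every element has a quasi-inverse, i.e.\ a \emph{countable von Neumann regular} ring. Putting $F_0=\operatorname{coker}(R_0^{(\N)}\xrightarrow{M}R_0^{(\N)})$, a countably presented flat $R_0$-module, right exactness of the tensor product gives $R\otimes_{R_0}F_0\cong F$. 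Thus, once $F_0$ is known to be quite flat over $R_0$, the base-change principle shows $F$ is quite flat over $R$.

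It then remains to treat a countable von Neumann regular ring $R_0$, where I claim every module is quite flat. The key point is that \emph{every} ideal $I\subseteq R_0$ presents the cyclic module $R_0/I$ as a localization at a countable multiplicative subset. Indeed, over a von Neumann regular ring every finitely generated ideal is generated by an idempotent, so $I$ is the directed union of the principal idempotent ideals $gR_0\subseteq I$; as $R_0$ is countable, these can be arranged into an increasing chain $g_1R_0\subseteq g_2R_0\subseteq\cdots$ of idempotent ideals with union $I$, and then automatically $g_ng_m=g_{\min(n,m)}$. The set $S=\{1\}\cup\{1-g_n\mid n\ge1\}$ is a countable multiplicative subset (since $(1-g_n)(1-g_m)=1-g_{\max(n,m)}$), the kernel of $R_0\to S^{-1}R_0$ equals $\bigcup_n\operatorname{Ann}(1-g_n)=\bigcup_n g_nR_0=I$, and each $1-g_n$ is invertible modulo $I$; hence $S^{-1}R_0\cong R_0/I$. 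In particular every cyclic $R_0$-module is quite flat.

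Finally I would invoke two standard facts: every module is a transfinite extension of cyclic modules (well-order a generating set), and the class of quite flat modules is closed under transfinite extensions, being the left-hand class of a cotorsion pair (Eklof's Lemma, cf.\ \cite[Corollary~6.14]{GT}). Combining them, every $R_0$-module, and in particular $F_0$, is quite flat, and the descent above completes the argument. The step that does the real work is the countable case: identifying an \emph{arbitrary} cyclic module $R_0/I$ with a localization is exactly where von Neumann regularity (equivalently, the purity of all ideals) is used, and the passage to a countable von Neumann regular subring is what guarantees that the multiplicative subsets so produced are countable, as the definition of quite flatness requires. The only points needing care are that the closure producing $R_0$ stays countable and genuinely von Neumann regular, and that the chain of idempotents realizing $I$ is handled correctly.
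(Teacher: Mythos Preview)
Your argument is correct and is essentially the paper's second proof of this theorem: descend $F$ to a countable von~Neumann regular subring $\overline{R}\subseteq R$ built by closing a countable generating set under quasi-inverses, observe that over $\overline{R}$ every cyclic module is a quotient by a countably generated ideal and hence a localization at a countable multiplicative subset, filter arbitrary modules by cyclics, and then base-change back via \cite[Lemma~8.3(b)]{PS}. The paper also records a first, more direct proof that avoids the descent by using coherence of $R$ to show that the successive cyclic quotients in a filtration of a countably presented $F$ are themselves countably presented, hence quotients by countably generated ideals; but your route coincides with the second.
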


\begin{proof}[First proof]
Indeed, let $R$ be a von Neumann regular commutative ring; so for every element $a\in R$ there exists $b\in R$ such that $aba=a$. Then the principal ideal $Ra$ generated by the element $a$ in $R$ coincides with the ideal $Rab$ generated by the idempotent element $ab$; and the localization $R[a^{-1}]$ of the ring $R$ at the multiplicative subset generated by $a$ coincides with the localization $R[(ab)^{-1}]$ at the multiplicative subset generated by $ab$.  For an idempotent element $e\in R$, one has $R[e^{-1}]=R/(1-e)R$. Thus the localizations of $R$ at countable multiplicative subsets are the same thing as the quotient rings of $R$ by countably generated ideals.

Over a von Neumann regular ring, all modules are flat.  Furthermore, the ring $R$ is coherent.  Hence any countably generated submodule of a countably presented $R$-module is countably presented.

Now let $F$ be countably presented $R$-module and let $\{f_n\mid n\in\N_0\}$ be its countable set of generators.  Then for any $n\ge0$ the $R$-module $F_n=F/(Rf_0+\cdots+Rf_n)$ is countably presented.  Let $G_0=Rf_0$ be the cyclic submodule generated by $f_0$ in $F$ and $G_{n+1}$ be the cyclic submodule generated by the coset of the element $f_{n+1}$ in $F_n$.
Then $G_n$ is a countably generated submodule of a countably presented $R$-module, hence $G_n$ is countably presented.  Being cyclic, $G_n$ is isomorphic to
the quotient of $R$ by a countably generated ideal. The $R$-module $F$ is filtered by the $R$-modules $G_0$, $G_1$, $G_2$,~\dots\  Thus $F$ is quite flat.
\end{proof}

\begin{proof}[Second proof]
More generally, by~\cite[Corollary~2.23]{GT}, a countably presented flat module $F$ over a commutative ring $R$ can be described by a sequence of finite matrices $h_0$, $h_1$, $h_2$,~\dots\ with entries in $R$ (as in the proof of Lemma~\ref{lifting-countable-flats}). All entries of such a sequence of matrices $(h_n)_{n\ge0}$ form a countable set of elements in $R$.  Let $\overline{R}\subseteq R$ be a subring containing all these matrix entries.  Then there is a countably presented flat $\overline{R}$-module $\overline{F}$
such that $F=R\otimes_{\overline{R}}\overline{F}$.

Now let $R$ be a von Neumann regular commutative ring and $R_0\subseteq R$ be a countable subring.  Define inductively a sequence of subrings $R_0\subseteq R_1\subseteq R_2\subseteq\cdots$ in $R$ indexed by the natural numbers $n\in\N_0$ as follows.  For every element $a\in R_n$, choose an element $b\in R$ such that $aba=a$.  Denote by $R_{n+1}\subseteq R$ the subring generated by $R_n$ and all the elements $b$ so chosen.  Put $\overline{R}=\bigcup_{n\ge0}R_n$.  Then $\overline{R}$ is a countable von Neumann regular subring in $R$ containing $R_0$.

Let $F$ be a countably presented module over a von Neumann regular commutative ring $R$.  Using the previous observations, one can construct a countable von Neumann regular subring $\overline{R}\subseteq R$ and a countably presented $\overline{R}$-module $\overline{F}$ such that $F=R\otimes_{\overline{R}}\overline{F}$.  Now we observe that every $\overline{R}$-module is filtered by cyclic $\overline{R}$-modules, and all cyclic $\overline{R}$-modules $\overline{R}/\overline{I}$ are quotients of $\overline{R}$ by countably generated ideals $\overline{I}$.
According to the first proof, since the ring $\overline{R}$ is von Neumann regular, we have $\overline{R}/\overline{I}=\overline{S}^{-1}\overline{R}$ for a certain countable multiplicative subset $\overline{S}\subseteq\overline{R}$.  Thus all $\overline{R}$-modules are quite flat.  By \cite[Lemma 8.3(b)]{PS}, the $R$-module $F=R\otimes_{\overline{R}}\overline{F}$ is quite flat.
\end{proof}

\begin{remark} \label{countable-vN-regular}
The assertion of Corollary \ref{main-corollary} also holds true with the Noetherianity condition replaced by the von Neumann regularity condition.  Moreover, similarly to \cite[Remark 8.10]{PS}, for any von Neumann regular commutative ring $R$ with countable spectrum there exists a countable collection of countable multiplicative subsets $S_1$, $S_2$, $S_3$,~\dots\ $\subseteq R$ such that every $R$-module is filtered by modules isomorphic to $S_j^{-1}R$, \ $j=1$, $2$, $3$,~\dots\  Indeed, the spectrum of a von Neumann regular ring $R$ is a compact Hausdorff space, and ideals in $R$ correspond bijectively to closed subsets of the spectrum.
By Baire's category theorem, any countable compact Hausdorff space has an isolated point.  It follows that any von Neumann regular ring $R$ with countable spectrum is semiartinian, i.e., all $R$-modules are filtered by simple $R$-modules.  It remains to let the index $j$ number the points of $\Spec R$, and observe that all the prime ideals $\p_j\in\Spec R$ are countably generated, so there exists a countable multiplicative subset $S_j\subseteq R$ such that $S_j^{-1}R=R/\p_j$.
\end{remark}

\begin{exmpl} \label{cfq-Bezout-example}
Here is an example of a non-almost perfect one-dimensional CFQ domain \cite[Example 3.7]{Sal}.  The domain in question is a B\'ezout ring \cite[Section~III.5]{FS}, that is, a ring in which every finitely generated ideal is principal.  The divisibility group of a B\'ezout domain is a lattice-ordered group, and conversely, any lattice-ordered group is the divisibility group of a B\'ezout domain.

We are interested in a B\'ezout domain $R$ whose divisibility group $\Gamma$ is isomorphic to the subgroup of all eventually constant sequences of integers in $\mathbb Z^\N$ with pointwise ordering.  Following \cite[Example 3.7]{Sal}, all the localizations of $R$ at its maximal ideals are Noetherian discrete valuation rings, still $R$ is not Noetherian and not h-local, hence not almost perfect.

Let us show that $R$ is a CFQ ring.  For every $n\in\N$, consider the valuation $v_n$ on $R$ corresponding to the $n$-th coordinate in $\Gamma$.  For any nonzero element $s\in R$, consider an element $a\in R$ such that $v_n(a)=0$ whenever $v_n(s)=0$ and $v_n(a)=1$ whenever $v_n(s)>0$.  Then the nilradical of the ring $R/sR$ is the principal ideal generated by a nilpotent element~$a+sR$.

The quotient ring $R/aR$ is von Neumann regular, hence CFQ by Theorem~\ref{vNRs-are-CFQ}.  Applying Proposition~\ref{zerodivisor-pair-CFQ} iteratively to the rings $R/a^mR$, $m\ge1$, we conclude that the ring $R/sR$ is CFQ.  By Theorem~\ref{when-domain-cfq}, the domain $R$ is CFQ.

Besides, commutative von Neumann regular rings whose spectrum has no isolated points are examples of zero-dimensional CFQ rings which are not $S$\nobreakdash-almost perfect.
\end{exmpl}

\subsection{Valuation domains} \label{section-valuation-domains}
	Recall that an integral domain $R$ is a \emph{valuation domain} if its lattice of ideals is totally ordered, and that $R$ is a \emph{Pr\"{u}fer domain} if $R_{\p}$ is a valuation domain for each $\p \in \Spec(R)$. Any valuation domain is a Pr\"{u}fer domain. Furthermore, all Pr\"{u}fer domains have weak global dimension at most one (\cite[Corollary 4.2.6]{Gl}), meaning that submodules of flat $R$-modules are flat.
	
	The theory of purity simplifies considerably over Pr\"{u}fer domains, which will be useful to recall for the sequel. Let $R$ be a Pr\"{u}fer domain and $Q$ its field of quotients. By Warfield's theorem \cite[Theorem I.8.11]{FS}, it is sufficient to check purity over $R$ on simple divisibility equations of the form $rx = a$, where $r \in R$. As a simple consequence, flat $R$-modules coincide with the $R$-modules which are torsion-free. Let $F$ be a flat $R$-module and $M$ a submodule of $F$. Then the equation $rx = a$ has at most one solution in $F$ for each $r \in R$ and $a \in F$. Therefore, the intersection of all pure submodules of $F$ containing $M$ is a pure submodule of $F$, called the \emph{purification} of $M$ in $F$, see \cite[p. 47]{FS}. Clearly, the purification of $M$ in $F$ is of the form $\langle M \rangle_* = \{f \in F \mid rf \in M \text{ for some non-zero $r \in R$}\}$.
    Given an $R$-module $N$, we say that $N$ is of \emph{rank $\kappa$}, where $\kappa$ is a cardinal number, if the vector space $N \otimes_R Q$ is of dimension $\kappa$ over $Q$. It follows directly from the description of purification above that if $M$ is a submodule of a flat $R$-module $F$, then the ranks of $M$ and $\langle M \rangle_*$ are the same.

    If $R$ is a valuation domain, then all the localizations of $R$ at multiplicative subsets are localizations at prime ideals, i.e., for every multiplicative subset $S\subseteq R$ there exists a prime ideal $\q$ in $R$ such that $S^{-1}R=R_\q$ \cite[Proposition II.1.5]{FS}.

\begin{lemma}\label{prufer-rankone}
	Let $R$ be a Pr\"{u}fer domain. Then $R$ is a CFQ ring if and only if every countably generated flat $R$-module of rank $1$ is quite flat.
\end{lemma}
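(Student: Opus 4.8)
The plan is to prove the non-trivial ``if'' implication by filtering an arbitrary countably presented flat module by flat modules of rank one, and then reading off quite flatness from the hypothesis. The ``only if'' direction is immediate: since $R$ is a Pr\"ufer, hence coherent, domain, every countably generated flat $R$-module is countably presented, so a countably generated flat module of rank one is in particular a countably presented flat module and is therefore quite flat as soon as $R$ is CFQ. For the converse I would rely on two facts. First, the quite flat modules are exactly the left Ext-orthogonal class of the class $\mc C$ of almost cotorsion modules; hence, by Eklof's lemma \cite[Lemma~6.2]{GT}, the class of quite flat modules is closed under transfinite extensions. It therefore suffices to exhibit, for an arbitrary countably presented flat module $F$, a filtration of $F$ whose consecutive quotients are countably generated flat modules of rank one. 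Second, I would use that over the Pr\"ufer domain $R$ flatness coincides with torsion-freeness, that for a submodule $M$ of a flat module the purification $\langle M\rangle_*$ is a pure submodule with flat quotient, and that $M$ and $\langle M\rangle_*$ have the same rank.

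To build the filtration, fix a countable generating set $x_1,x_2,\dots$ of $F$, put $F_0=0$, and let $F_n=\langle Rx_1+\dots+Rx_n\rangle_*$ be the purification in $F$. Then $(F_n)_{n\in\N}$ is an increasing chain of pure submodules of $F$ with $\bigcup_n F_n=F$, since $x_n\in F_n$ for every $n$. As $F_n$ is pure in $F$ it is pure in $F_{n+1}$, so each quotient $F_{n+1}/F_n$ is flat; by additivity of rank together with the rank-invariance of purification, its rank equals $\operatorname{rank}F_{n+1}-\operatorname{rank}F_n\in\{0,1\}$, since adjoining the single generator $x_{n+1}$ raises the rank by at most one. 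Discarding the vanishing quotients, $F$ is thereby exhibited as a countable transfinite extension of flat modules of rank one.

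The main obstacle is to verify that each quotient $F_{n+1}/F_n$ is \emph{countably generated}, so that the hypothesis on rank-one modules may be applied. I expect to handle this by induction on $n$, using coherence at each step. Assuming $F_n$ countably generated, it is a countably generated submodule of the countably presented module $F$, hence countably presented, and consequently $F/F_n$ is countably presented as well. Now $F_{n+1}/F_n$ is a rank-one pure submodule of $F/F_n$, and its further quotient $(F/F_n)\big/(F_{n+1}/F_n)\cong F/F_{n+1}$ is a pure, hence flat, quotient of the countably generated module $F/F_n$; thus it is a countably generated flat module and therefore countably presented by coherence. A Schanuel-type argument then shows that the kernel $F_{n+1}/F_n$ of the surjection from the countably generated module $F/F_n$ onto the countably presented module $F/F_{n+1}$ is countably generated, which simultaneously closes the induction by making $F_{n+1}$ countably generated.

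Putting these together, every nonzero quotient $F_{n+1}/F_n$ is a countably generated flat module of rank one, hence quite flat by assumption, and the closure of the class of quite flat modules under transfinite extensions yields that $F$ is quite flat. The delicate point throughout is the passage between ``countably generated'' and ``countably presented''; this is precisely where coherence of the Pr\"ufer domain $R$ is used, exactly as in the analogous step of the first proof of Theorem~\ref{vNRs-are-CFQ}.
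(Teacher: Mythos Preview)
Your proof is correct and follows essentially the same approach as the paper: build the pure filtration of $F$ by purifications $F_n=\langle Rx_1+\cdots+Rx_n\rangle_*$, observe that the consecutive quotients are flat of rank~$\le 1$, check that they are countably generated, and conclude by closure of quite flat modules under transfinite extensions. The paper argues the countable generation of $F_{n+1}/F_n$ slightly more directly (showing $F_{n+1}$ itself is countably generated from the countable presentability of $F/F_{n+1}$, without the induction on $n$), and it attributes the key passage ``countably generated flat $\Rightarrow$ countably presented'' over a Pr\"ufer domain to \cite[Proposition~6]{EFS} rather than to bare coherence; but these are cosmetic differences.
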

\begin{proof}
	First, it follows from \cite[Proposition 6]{EFS} that all countably generated flat $R$-modules are countably presented. This renders the only-if part of the statement trivial. Let us prove the other implication. Let $F$ be a countably generated flat $R$-module with some fixed set $\{x_n \mid n \in \N\}$ of generators. For each $n \in \N_0$, let $M_n$ be the submodule of $F$ generated by the elements $\{x_1,x_2,\dots,x_n\}$ and let $F_n = \langle M_n \rangle_*$ be the purification of $M_n$ in $F$. This yields a pure filtration $F = \bigcup_{n \in \N_0} F_n$ of $F$ such that the consecutive quotients $F_{n+1}/F_n$ for $n \in \N_0$ are flat $R$-modules of rank $1$. For each $n \in \N_0$, the flat $R$-module $F/F_{n+1}$ is countably generated, and therefore countably presented. It follows that $F_{n+1}$ is a countably generated $R$-module, and therefore $F_{n+1}/F_n$ is a countably generated flat $R$-module of rank $1$ for each $n \in \N_0$. By the assumption, $F_{n+1}/F_n$ is quite flat for each $n \in \N_0$, and since $F$ is filtered by these, $F$ is quite flat.
\end{proof}

Following \cite[\S II.8 and \S III.7]{FS}, we call a Pr\"{u}fer domain $R$ \emph{strongly discrete} if no non-zero prime ideal of $R$ is idempotent.
A Pr\"{u}fer domain is strongly discrete if and only if all its localizations at prime ideals are strongly discrete valuation domains \cite[Proposition III.7.4]{FS}. Before going into the proof of the main result of this subsection, let us recall from \cite[Lemmas II.4.3(iv) and II.4.4]{FS} that for any prime ideal $\p$ of a valuation domain $R$ we have $\p = \p R_{\p}$. In particular, we can view any non-zero prime ideal $\p$ of $R$ as the maximal ideal of the valuation domain $R_{\p}$.

\begin{thm}\label{valuation-domains}
	Let $R$ be a valuation domain. Then $R$ is CFQ if and only if $R$ is strongly discrete.
\end{thm}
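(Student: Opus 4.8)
The plan is to pass to rank one and then read everything off the value group. By Lemma~\ref{prufer-rankone} it suffices to decide, for each direction, whether every countably generated flat $R$-module of rank~$1$ is quite flat, and each such module is (up to isomorphism) a countably generated submodule $F$ of the quotient field $Q$. Writing $\Gamma$ for the value group of $R$ and $v$ for the valuation, the set $v(F\setminus\{0\})$ is an upward closed subset (a \emph{cut}) of $\Gamma$, and since $F$ is countably generated this cut has countable coinitiality. Two kinds of cut are already understood: an \emph{attained} cut (one with a least element) yields a cyclic $F\cong R$, while a cut sitting at a convex subgroup $H\subseteq\Gamma$ yields $F\cong R_\q$ for the corresponding prime $\q$; because the cut has countable coinitiality, such a $\q$ is reached by a countable multiplicative subset $S$ with $S^{-1}R=R_\q$, so these $F$ are localizations and hence quite flat. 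The whole proof turns on showing that strong discreteness forces every cut into one of these two classes, and that its failure produces a cut of a third, genuinely ``open'' kind.

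For the implication ``strongly discrete $\Rightarrow$ CFQ'', I would prove the structural Key Lemma: \emph{over a strongly discrete valuation domain every countably generated flat module of rank one is isomorphic to $R$ or to $S^{-1}R$ for a countable multiplicative subset $S$}; CFQ is then immediate from Lemma~\ref{prufer-rankone}. Strong discreteness means that no nonzero prime is idempotent, which on the value group says that every proper convex subgroup has an immediate successor and that every archimedean jump is a copy of $\Z$. Consequently there are no open cuts \emph{inside} an archimedean class (a $\Z$-class is discrete), and every ``sharp'' cut between classes is exactly a cut at a convex subgroup. Thus a cut that is not attained must sit at a convex subgroup, i.e.\ $F$ is a localization; this is the content of the Key Lemma.

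For the converse I argue by contraposition, using that CFQ descends to localizations (Proposition~\ref{cfq-closed-under}). If some nonzero prime $\p$ is idempotent, I replace $R$ by $R_\p$ and so may assume $R$ is local with $\m=\m^2\neq0$, equivalently that $\Gamma$ has no least positive element. When $\dim R=1$ this $R$ is a non-discrete valuation domain, hence not almost perfect, hence not CFQ by Corollary~\ref{almost-perfect-local-domains}. In general the obstruction is the maximal ideal itself: the cut $\{g\in\Gamma\mid g>0\}$ defining $\m$ is open (no least element) and has trivial stabilizer, so by the dichotomy above $\m$ is neither free nor a localization. Choosing values $\sigma_1>\sigma_2>\cdots>0$ and elements $t_n\in\m$ with $v(t_n)=\sigma_n-\sigma_{n+1}$, the Bass-type module $B=\varinjlim(R\xrightarrow{t_1}R\xrightarrow{t_2}\cdots)$ is a countably presented flat model of such a cut, and I would show it is not quite flat by identifying $\Ext^1_R(B,C)\cong{\varprojlim}^{1}(C\xleftarrow{t_1}C\xleftarrow{t_2}\cdots)$ and exhibiting a cotorsion (hence almost cotorsion) module $C$ for which this derived inverse limit does not vanish.

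The hard part is the transfinite geometry of $\Spec R$. Idempotency of $\m$ can arise not from a single non-discrete jump but from an infinite descent of discrete jumps with \emph{no} smallest nonzero convex subgroup (for instance $\Gamma=\bigoplus_{n}\Z$ ordered by first nonzero coordinate); in that case there is no dimension-one quotient to which Corollary~\ref{almost-perfect-local-domains} applies, and $\m$ need not even be countably generated. The remedy is to localize first at a suitable limit prime so that the bottom of the spectrum acquires countable cofinality, which is exactly what makes $\m$ countably generated and the Bass presentation available; carrying this out, and verifying the non-vanishing of ${\varprojlim}^{1}$ against an explicit almost cotorsion module, is the technical core of the ``only if'' direction. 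Symmetrically, the delicate point in the Key Lemma is to rule out open cuts of trivial stabilizer, which is precisely where the immediate-successor form of strong discreteness is used.
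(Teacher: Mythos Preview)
Your forward direction is essentially the paper's: both reduce via Lemma~\ref{prufer-rankone} to rank-one flat modules and show each is isomorphic to some $R_\p$. The paper gets this from \cite[Theorem II.8.3]{FS} (every ideal of a strongly discrete valuation domain is isomorphic to a prime, and a non-idempotent prime $\p$ is principal in $R_\p$, hence $\p\cong R_\p$); your cut analysis in the value group is a rephrasing of the same fact.

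The converse contains a genuine error. You propose to show the Bass module $B$ is not quite flat by exhibiting a \emph{cotorsion} module $C$ with $\Ext^1_R(B,C)\cong\varprojlim^1(\cdots)\neq 0$. But $B$ is flat, and by definition a cotorsion module satisfies $\Ext^1_R(F,C)=0$ for \emph{every} flat $F$; so no cotorsion $C$ can witness this, and the $\varprojlim^1$ you write down vanishes for every such $C$. What you would actually need is an almost cotorsion module that is \emph{not} cotorsion with nonzero $\Ext^1$ against $B$, and you give no construction---this is precisely the hard part, and the ``explicit witness'' route is not how the paper proceeds.

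The paper's argument for the converse is structural rather than via a witness. After handling successor primes by your dimension-one reduction to Corollary~\ref{almost-perfect-local-domains}, it assumes $\Spec R$ fails ACC, picks an $\omega$-chain $\p_0\subset\p_1\subset\cdots$ with limit $\m=\bigcup_n\p_n$ (so $\m$ is countably generated after localizing at $\m$), and proves by transfinite induction on the ordinal length $\lambda$ of a filtration that $\m$ is not a direct summand of any module $F=\bigcup_{\alpha<\lambda}F_\alpha$ with $F_{\alpha+1}/F_\alpha\cong R_{\q_\alpha}$. At limit $\lambda$ one uses that $\m$ has rank one, so it already sits inside some $F_\beta$ with $\beta<\lambda$. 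At successor $\lambda=\beta+m$ (with $\beta$ limit or zero) one chooses $k$ large enough that each $\q_{\beta+i}\subsetneq\p_k$ or $\q_{\beta+i}=\m$, tensors with $R/\p_k$ to make the top $m$ layers projective, and reduces to the limit case over the valuation domain $R/\p_k$ using that $\m/\p_k$ is again an idempotent $\omega$-limit maximal ideal. This filtration-theoretic induction is the missing idea in your proposal.
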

\begin{proof}
	We start with the assumption that $R$ is a strongly discrete valuation domain. By Lemma~\ref{prufer-rankone}, it is enough to show that any countably generated flat $R$-module of rank $1$ is quite flat.
	
	We claim that for any flat $R$-module $I$ of rank $1$ there is a prime ideal $\p \in \Spec(R)$ such that $I \cong R_{\p}$. Indeed, since $I$ is of rank $1$, we can view $I$ as a submodule of $Q$. If $I = Q$, the claim is true for $\p = 0$, so we can further assume $I \neq Q$. Then for any $q \in Q \setminus I$ we have $q^{-1}I \subseteq R$, and therefore we can assume that $I$ is an ideal of $R$, see also \cite[Lemma II.1.4]{FS}. From the strong discreteness of $R$ and \cite[Theorem II.8.3]{FS}, we then obtain that $I$ is isomorphic to a (necessarily non-zero) prime ideal $\p$ of $R$.  Moreover, since $\p$ is not idempotent, $\p$ is then a principal ideal of the ring $R_{\p}$ (\cite[p. 69(d)]{FS}), and therefore $\p \cong R_{\p}$ as $R$-modules, validating the claim.
	
	Now if $I$ is countably generated, then there is a countable multiplicative subset $S$ of $R$ such that $I \cong R_{\p} \cong S^{-1}R$. Therefore, $I$ is quite flat.

	Next we aim to prove the converse implication, so let us assume that $R$ is CFQ. First, let $\p \subseteq \q$ be prime ideals in $R$ such that $\p \neq \q$, and such that there is no prime ideal between $\p$ and $\q$ in $(\Spec(R),\subseteq)$. Then the domain $R_{\q}/\p$ is a valuation domain of Krull dimension one, and it is CFQ by Proposition~\ref{cfq-closed-under}. By Corollary~\ref{almost-perfect-local-domains}, $R$ is an almost perfect domain, and therefore using Example~\ref{puiseux} we get that the maximal ideal $\q/\p$ of $R_{\q}/\p$ cannot be idempotent. It follows that $\q$ is not an idempotent ideal of the ring $R$. We proved that all prime ideals of $R$ which are successors in $(\Spec(R),\subseteq)$ are not idempotent.

	We finish the proof by showing that $R$ is not CFQ if the totally ordered set $(\Spec(R),\subseteq)$ does not satisfy the ascending chain condition (cf. \cite[Theorem II.8.3]{FS}). In such a case, there is a strictly increasing chain $\p_0 \subset \p_1 \subset \p_2 \subset \cdots$ of prime ideals in $\Spec(R)$ indexed by $\N_0$, and we denote the limit prime ideal as $\m = \bigcup_{n \in \N_0}\p_n$. Using Proposition~\ref{cfq-closed-under} again, it is enough to show that the valuation domain $R_{\m}$ is not CFQ, and so we can assume that $\m$ is the maximal ideal of $R$. Note that $\m$ is generated by the set $\{s_n \mid n > 0\}$ of elements of $R$, where $s_n$ is any element from $\p_{n+1} \setminus \p_n$. Therefore, $\m$ is a countably generated flat $R$-module of rank $1$. In view of Lemma~\ref{prufer-rankone}, it is sufficient to show that $\m$ is not quite flat. We proceed by the following inductive argument.

	By transfinite induction on ordinal $\lambda$, we show that $\m$ is not isomorphic to a direct summand in an $R$-module $F$ which admits a filtration $F = \bigcup_{\alpha < \lambda} F_\alpha$, where $F_{\alpha+1}/F_\alpha$ is isomorphic to $R_{\q_\alpha}$ for some prime ideal $\q_\alpha \in \Spec(R)$ (see the note preceding Lemma \ref{prufer-rankone}) for each $\alpha < \lambda$. The case of $\lambda = 0$ is clear as $F_0 = 0$. Assume first that $\lambda$ is a limit ordinal. Since $\m \subseteq \bigcup_{\alpha < \lambda} F_\alpha$, there is $\beta < \lambda$ such that $\m \cap F_\beta \neq 0$. But $\m \cap F_\beta$ is a pure submodule of $\m$, and since $\m$ is of rank $1$, this necessarily means that $\m \subseteq F_\beta$. Therefore, $\m$ is a direct summand in $F_\beta = \bigcup_{\alpha < \beta + 1} F_\alpha$, which is a contradiction by the induction hypothesis. 

Finally, let as assume that $\lambda$ is a non-limit ordinal, and write $\lambda = \beta + m$, where $\beta$ is either a limit ordinal or zero, and $m > 0$ is a positive natural number. Because $\m = \bigcup_{n \in \N_0} \p_n$, and $\Spec(R)$ is totally ordered, there is $k \in \N_0$ such that $\q_{\beta+i}$ is properly contained in $\p_k$ for all $i=0,1,\ldots,m-1$ such that $\q_{\beta+i} \neq \m$. It follows that $(R/\p_k) \otimes_R R_{\q_{\beta+i}}$ is a projective $R/\p_k$-module for all $i=0,1,\ldots,m-1$.
In fact, one has $(R/\p_k) \otimes_R R_{\q_{\beta+i}}\cong R/\p_k$ or $0$.
Consequently, the $R/\p_k$-module $(F/F_{\beta}) \otimes_R (R/\p_k)$ is projective, and therefore \begin{equation}\label{splitting}
    F \otimes_R (R/\p_k) \cong \bigl(F_\beta \otimes_R (R/\p_k)\bigr) \oplus (R/\p_k)^{(l)}
\end{equation} 
for some $0 \leq l \leq m$. Also, we have $\m\p_k=\p_k$, and therefore $\m/\p_k\cong\m\otimes_R (R/\p_k)$. Then the maximal ideal $\m/\p_k$ of the valuation domain $R/\p_k$ is again isomorphic to a direct summand in $F \otimes_R (R/\p_k)$. Note that $\m/\p_k$ can be written as the union of the strictly increasing chain $0 = \p_k/\p_k \subset \p_{k+1}/\p_k \subset \p_{k+2}/\p_k \subset \cdots$ of prime ideals of $R/\p_k$, and it is an idempotent ideal.

If $\beta = 0$, then $F_\beta = 0$, and therefore $\m/\p_k$ is a projective $R/\p_k$-module. But then $\m/\p_k$ is a principal ideal in the valuation domain $R/\p_k$, which is impossible since $\m/\p_k$ is a non-zero idempotent ideal. If $\beta$ is a limit ordinal, the isomorphism (\ref{splitting}) allows us to rearrange the terms of the filtration in order to show that $F \otimes_R (R/\p_k)$ admits a filtration by localizations of the valuation domain $R/\p_k$ indexed by the limit ordinal $l + \beta = \beta$. Therefore, we conclude that $\m/\p_k$ being a direct summand in $F \otimes_R (R/\p_k)$ is in contradiction with the induction premise for the ordinal $\beta < \lambda$ applied in the case of the valuation domain $R/\p_k$.
\end{proof}

\begin{cor}
	Let $R$ be a Pr\"{u}fer domain. If $R$ is CFQ then $R$ is strongly discrete.
\end{cor}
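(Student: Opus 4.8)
The plan is to reduce the statement to the already-established valuation-domain case, Theorem~\ref{valuation-domains}, by exploiting the fact that strong discreteness of a Pr\"ufer domain is a local property. Recall from \cite[Proposition III.7.4]{FS} that a Pr\"ufer domain $R$ is strongly discrete if and only if the localization $R_\p$ is a strongly discrete valuation domain for every prime ideal $\p\in\Spec(R)$. So it suffices to verify this local condition at each prime.

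First I would fix an arbitrary prime ideal $\p$ of $R$. Since $R$ is a Pr\"ufer domain, $R_\p$ is a valuation domain by definition. Writing $R_\p=S^{-1}R$ for the multiplicative subset $S=R\setminus\p$, Proposition~\ref{cfq-closed-under} guarantees that the CFQ property passes from $R$ to its localization $R_\p$; thus $R_\p$ is a CFQ valuation domain. At this point Theorem~\ref{valuation-domains} applies directly and tells us that $R_\p$ is strongly discrete. As $\p$ was arbitrary, every localization of $R$ at a prime is a strongly discrete valuation domain, and the characterization \cite[Proposition III.7.4]{FS} then yields that $R$ itself is strongly discrete.

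Because the two ingredients---stability of CFQ under localization and the valuation-domain equivalence---are already available, there is no genuine obstacle here; the only point requiring care is simply to invoke the local-to-global characterization of strong discreteness in the correct direction (from localizations back to $R$), rather than re-deriving anything about idempotent primes by hand. One could alternatively argue contrapositively: if some nonzero prime $\p$ were idempotent, then $R_\p$ would fail to be strongly discrete, hence fail to be CFQ by Theorem~\ref{valuation-domains}, and Proposition~\ref{cfq-closed-under} would force $R$ to be non-CFQ; but the localized formulation above is cleaner and avoids any case analysis.
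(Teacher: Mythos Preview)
Your proof is correct and follows exactly the approach of the paper: the paper's proof is the single sentence ``Follows by combining Theorem~\ref{valuation-domains} and Proposition~\ref{cfq-closed-under},'' and your write-up simply unpacks this, together with the local characterization of strong discreteness \cite[Proposition III.7.4]{FS} that the paper has already recalled before Theorem~\ref{valuation-domains}.
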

\begin{proof}
	Follows by combining Theorem~\ref{valuation-domains} and Proposition~\ref{cfq-closed-under}.
\end{proof}

\begin{remark}
	If $R$ is a strongly discrete valuation domain, then the totally ordered set $(\Spec(R), \supseteq)$ satisfies the descending chain condition (\cite[Theorem II.8.3]{FS}), and therefore is order-isomorphic to an ordinal number. On the other hand, any ordinal number is order isomorphic to $(\Spec(R) \setminus \{0\}, \supseteq)$ for a suitable strongly discrete valuation domain, see \cite[Example II.8.5]{FS}.
\end{remark}

\section{Finitely quite flat modules}
\label{section-finitely-quite-flat}

Let $R$ be a commutative ring and $S_1$, \dots, $S_m\subseteq R$ be a finite sequence of multiplicative subsets. For brevity, we will denote the collection of multiplicative subsets $S_1$,~\dots, $S_m$ by a single letter $\mathbf S$. A left $R$-module $F$ is said to be \emph{$\mathbf S$-strongly flat} if it is a direct summand of an $R$-module filtered by $R$, $S_1^{-1}R$, \dots, $S_m^{-1}R$.
 
For any subset of indices $L\subseteq\{1,\dots,m\}$, denote by $S_L\subseteq R$ the multiplicative subset in $R$ generated by (the union of) the multiplicative subsets $S_l$, $l\in L$.  Let $\mathbf S^\times$ denote the collection of $2^m$ multiplicative subsets $S_L\subseteq R$, \ $L\subseteq\{1,\dots,m\}$.
 
Put $K=\{1,\dots,m\}\setminus L$. For every $k\in K$, choose an element $s_k\in S_k$, and denote the collection of elements $(s_k)_{k\in K}$ by $\mathbf s$. Let $R_{L,\mathbf s}$ denote the quotient ring of the ring $S_L^{-1}R$ by the ideal generated by the elements $s_k$, $k\in K$.  So the ring $R_{L,\mathbf s}$ is obtained from the ring $R$ by inverting all the elements of the multiplicative subsets $S_l$, $l\in L$, and annihilating one chosen element $s_k\in S_k$ for every $k\in K$.

The following theorem is a particular case of \cite[Theorem 1.10]{PS}.  It only differs from the general case in that we assume all the multiplicative subsets to be countable.

\begin{thm} \label{s-times-strongly-flat}
Let $R$ be a commutative ring and $S_1$,~\dots, $S_m\subseteq R$ be a finite sequence of (at most) countable multiplicative subsets in $R$.  Let $F$ be a flat $R$-module.
Then the $R$-module $F$ is $\mathbf S^\times$-strongly flat if and only if the $R_{L,\mathbf s}$-module $R_{L,\mathbf s}\otimes_R F$ is projective for every subset of indices $L\subseteq\{1,\dots,m\}$ and any choice of elements $s_k\in S_k$, $k\in\{1,\dots,m\}\setminus L$.
\end{thm}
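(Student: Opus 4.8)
The plan is as follows. Since the displayed equivalence is exactly \cite[Theorem 1.10]{PS} under the extra --- and entirely inessential --- hypothesis that the subsets $S_i$ be countable, the shortest route is to quote that theorem directly: countability of the multiplicative subsets enters neither the conclusion nor its proof. For a self-contained argument I would separate the two implications, proving ``only if'' by base change and ``if'' by induction on $m$.

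For the ``only if'' direction, suppose $F$ is a direct summand of an $R$-module $G$ filtered by the modules $S_{L'}^{-1}R$, $L'\subseteq\{1,\dots,m\}$. Fix $L$ and a choice $\mathbf s=(s_k)_{k\in K}$, where $K=\{1,\dots,m\}\setminus L$. The first step is the computation
\[
 R_{L,\mathbf s}\otimes_R S_{L'}^{-1}R\;\cong\;
 \begin{cases} R_{L,\mathbf s}, & L'\subseteq L,\\[2pt] 0, & L'\not\subseteq L,\end{cases}
\]
where the vanishing holds because any $k\in L'\setminus L$ lies in $K$, so $s_k=0$ in $R_{L,\mathbf s}$, while $s_k\in S_k\subseteq S_{L'}$ becomes invertible upon tensoring with $S_{L'}^{-1}R$. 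Since each subquotient $S_{L'}^{-1}R$ is flat, $\Tor_1^R(R_{L,\mathbf s},S_{L'}^{-1}R)=0$, so applying $R_{L,\mathbf s}\otimes_R{-}$ to the given filtration of $G$ yields a filtration of $R_{L,\mathbf s}\otimes_R G$ whose subquotients are copies of the free module $R_{L,\mathbf s}$ and zero modules. As projective modules are closed under transfinite extensions, $R_{L,\mathbf s}\otimes_R G$ is projective over $R_{L,\mathbf s}$, hence so is its direct summand $R_{L,\mathbf s}\otimes_R F$.

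For the ``if'' direction I would induct on $m$. The case $m=0$ is the tautology that $\mathbf S^\times$-strong flatness is projectivity and that the sole condition ($L=\emptyset$) is projectivity of $F$. In the inductive step I single out $S_m$ and sort the conditions according to whether $m\in L$ or $m\in K$. Writing $\bar R=S_m^{-1}R$ and $\bar F=S_m^{-1}F$, the identity $S_{L'\cup\{m\}}^{-1}R=\bar S_{L'}^{-1}\bar R$ (for $L'\subseteq\{1,\dots,m-1\}$) shows that the conditions with $m\in L$ are precisely the full family of conditions for the $(m-1)$-element collection $\bar{\mathbf S}=(\bar S_1,\dots,\bar S_{m-1})$ over $\bar R$ applied to $\bar F$; by the induction hypothesis they hold if and only if $\bar F$ is $\bar{\mathbf S}^\times$-strongly flat over $\bar R$. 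Likewise, using $S_L^{-1}(R/s_mR)=S_L^{-1}R/(s_m)$, for each fixed $s_m\in S_m$ the conditions with $m\in K$ become the conditions for the $(m-1)$-element collection over $R/s_mR$ applied to $F/s_mF$, equivalent by induction to strong flatness of $F/s_mF$ over $R/s_mR$ with respect to the images of $S_1,\dots,S_{m-1}$.

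The hard part, and the real content of \cite[Theorem 1.10]{PS}, is the final gluing: one must show that $F$ is $\mathbf S^\times$-strongly flat over $R$ if and only if $S_m^{-1}F$ is $\bar{\mathbf S}^\times$-strongly flat over $S_m^{-1}R$ and $F/s_mF$ is strongly flat over $R/s_mR$ for every $s_m\in S_m$. This is the strong-flatness analogue of the Main Lemma \ref{main-lemma2}, and it specializes, when $m=1$, to \cite[Theorem 1.3]{PS}. Its forward implication is again base change, as in the ``only if'' step; the reverse implication, which must assemble an honest $\{S_L^{-1}R\}_L$-filtration of $F$ out of a $\{\bar S_{L'}^{-1}\bar R\}$-filtration of $S_m^{-1}F$ together with the filtrations of the modules $F/s_mF$, is where the technical work lies. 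I expect this dévissage --- most cleanly carried out as an $\Ext$-vanishing argument against the cotorsion pair generated by the modules $S_L^{-1}R$, in the spirit of the proof of Lemma \ref{main-lemma2} --- to be the main obstacle; and, as noted, the countability of the $S_i$ is needed nowhere along the way.
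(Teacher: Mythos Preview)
Your primary approach---citing \cite[Theorem~1.10]{PS} directly and observing that the countability hypothesis is an inessential specialization---is exactly what the paper does: the theorem is introduced as ``a particular case of \cite[Theorem 1.10]{PS}'' differing only in the added countability assumption, and no separate proof is given. Your supplementary outline of a direct argument (base change for ``only if'', induction on $m$ with a gluing step for ``if'') goes beyond the paper's treatment and is sound as a sketch, correctly identifying the gluing as the substantive content deferred to \cite{PS}.
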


\begin{exmpl}
	In \cite[Example II.8.6]{FS}, a valuation domain $R$ is constructed such that its spectrum, as a totally ordered set, is of the following form
	$$0 = \p_0 \subset \p_1 \subset \p_2 \subset \p_3 \subset \cdots \subset \m = \bigcup_{n \in \N_0} \p_n,$$
	and such that the prime ideals $\p_n$ are not idempotent for all $n \in \N$. The maximal ideal $\m$ is necessarily idempotent, and therefore $R$ is not strongly discrete, but it is discrete in the terminology of \cite[\S II.8]{FS}. For each $n \in \N_0$, there is a countable multiplicative subset $S_n$ such that $R_{\p_n} = S_n^{-1}R$ --- indeed, we can let $S_n$ be the multiplicative subset generated by any element $t_n \in \p_{n+1} \setminus \p_n$ for all $n \in \N$. Note that the $R_{\p_n}$-module $(\m \otimes_R R_{\p_n}) \cong R_{\p_n}$ is projective for each $n \in \N$. Furthermore, for any choice of element $s_n \in S_n \setminus \{1\}$, $n \in \N$, the set $\{s_n \mid n \in \N\}$ always generates the maximal ideal $\m$. Therefore, $(R/\m) \otimes_R \m = \m/\m^2 = 0$, a projective module again. Together, these conditions on projectivity of localizations and quotients of $\m$ yield the hypothesis of Theorem \ref{s-times-strongly-flat}, but generalized in a na\"{\i}ve way from finitely many multiplicative subsets to countably many. On the other hand, as demonstrated in the proof of Theorem~\ref{valuation-domains}, the (countably presented) flat $R$-module $\m$ is not quite flat. This shows that the natural na\"{\i}ve generalization of the statement of Theorem \ref{s-times-strongly-flat} from finitely many to countably many multiplicative subsets is no longer valid.
\end{exmpl}

A commutative ring is called \emph{locally perfect} if all its localizations at the maximal ideals are perfect.  A commutative ring $R$ is locally perfect if and only if its Jacobson radical $J$ is T-nilpotent and the quotient ring $R/J$ is von Neumann regular \cite{Fai}.

The following result is certainly not new, but we are not aware of a suitable reference.

\begin{prop} \label{projective-reduction}
Let $R$ be a (not necessarily commutative) ring and $I\subset R$ be a left T\nobreakdash-nilpotent two-sided ideal.  Then a flat left $R$-module $F$ is projective if and only if the $R/I$-module $F/IF$ is projective.
\end{prop}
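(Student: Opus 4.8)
The forward implication requires nothing beyond base change: if $F$ is a projective $R$-module it is a direct summand of some free module $R^{(X)}$, and applying $(R/I)\otimes_R{-}$ exhibits $F/IF=(R/I)\otimes_R F$ as a direct summand of the free $R/I$-module $(R/I)^{(X)}$, hence as a projective $R/I$-module. All the content is in the converse, so the plan is to assume $F$ flat with $F/IF$ projective over $R/I$ and deduce projectivity of $F$.

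The single external fact I would invoke is the Nakayama-type property of a left T\nobreakdash-nilpotent ideal: for every left $R$-module $M$ one has $IM=M\Rightarrow M=0$ (in fact $IM$ is a superfluous submodule of $M$), see \cite{Bas}. Together with flatness this already disposes of the case in which $F/IF$ is \emph{free}, which I would treat as a sub-lemma. Choose a basis $(\bar x_\lambda)_{\lambda\in\Lambda}$ of the free $R/I$-module $F/IF$, lift it to a family $(x_\lambda)$ in $F$, and let $p\colon R^{(\Lambda)}\to F$ be the induced homomorphism. Its cokernel $C$ satisfies $C/IC=0$ because the $\bar x_\lambda$ generate $F/IF$; hence $C=IC$, so $C=0$ and $p$ is surjective. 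Writing $K=\ker p$ and using $\Tor_1^R(R/I,F)=0$ (this is the one place flatness enters), the sequence $0\to K/IK\to R^{(\Lambda)}/IR^{(\Lambda)}\to F/IF\to 0$ is exact; but the right-hand map sends the standard basis of $(R/I)^{(\Lambda)}$ to the basis $(\bar x_\lambda)$ of $F/IF$, so it is an isomorphism and $K/IK=0$. A second application of the Nakayama property gives $K=0$, whence $p$ is an isomorphism and $F$ is free.

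To reduce the general projective case to this free case, the plan is to arrange freeness modulo $I$ by an Eilenberg swindle over $R/I$. Since $F/IF$ is projective there is an $R/I$-module $\bar G$ with $F/IF\oplus\bar G$ free, and then $\bar W:=(F/IF\oplus\bar G)^{(\N)}$ is a free $R/I$-module satisfying $F/IF\oplus\bar W\cong\bar W$. Writing $\bar W=(R/I)^{(Z)}$ and setting $W=R^{(Z)}$, the module $F\oplus W$ is flat (a direct sum of flats) and $(F\oplus W)/I(F\oplus W)\cong F/IF\oplus\bar W\cong\bar W$ is free over $R/I$. By the sub-lemma, $F\oplus W$ is a free $R$-module, and therefore $F$ is a direct summand of a free $R$-module, i.e.\ projective.

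The step I expect to be the genuine obstacle is precisely the passage from projectivity modulo $I$ back to projectivity over $R$: the short exact sequence $0\to K\to P\to F\to 0$ with $P$ free splits only after reduction modulo $I$, and a direct attempt to lift that splitting would amount to lifting an idempotent along $\operatorname{End}_R(P)\to\operatorname{End}_{R/I}(P/IP)$ for a possibly infinite index set, where the relevant kernel need not be manageably nilpotent. The swindle sidesteps this difficulty entirely, converting the projective datum into a free one and letting the two applications of the T\nobreakdash-nilpotent Nakayama lemma do all the remaining work.
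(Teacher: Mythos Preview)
Your proof is correct and follows essentially the same route as the paper's: first handle the case where $F/IF$ is free by lifting a basis and applying the T\nobreakdash-nilpotent Nakayama lemma twice (to the cokernel and then, using flatness, to the kernel), and then reduce the general projective case to the free one via the Eilenberg swindle. The paper cites \cite[Lemma 28.3]{AF} where you invoke \cite{Bas}, but the argument is otherwise identical.
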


\begin{proof}
Clearly, if $F$ is projective over $R$, then $F/IF$ is projective over $R/I$.  In order to prove the converse, we first consider the case when $F/IF$ is a free $R/I$-module; so $F/IF\cong (R/I)^{(X)}$ for a certain set $X$.

Let $G=R^{(X)}$ be the free left $R$-module with a basis indexed by the same set $X$.  Then we have $G/IG\cong F/IF$, and the obvious surjective $R$-module morphism $G\to F/IF$ can be lifted to an $R$-module morphism $g\colon G\to F$.

Let $C$ be the cokernel of $g$.  Then $C/IC=0$, and by \cite[Lemma 28.3]{AF} it follows that $C=0$.  Hence the map $g$ is surjective.

Let $K$ denote the kernel of $g$; then we have a short exact sequence of left $R$-modules $0\to K\to G\to F\to 0$.  Since $F$ is flat, tensoring by $R/I$ gives the short exact sequence $0\to K/IK\to G/IG\to F/IF\to0$. The map $G/IG\to F/IF$ is an isomorphism by construction, so $K/IK=0$.  Applying \cite[Lemma 28.3]{AF} again, we conclude that $K=0$ and $F\cong G$.

The general case follows by Eilenberg's trick.  Suppose $F/IF$ is a direct summand of a free $R/I$-module $(R/I)^{(Y)}$.  Denote by $Z$ the set $Y\times\N$.  Then the $R/I$-modules $(R/I)^{(Z)}$ and $F/IF\oplus (R/I)^{(Z)}$ are both free (and isomorphic to each other).  Consider the $R$-module $F'=F\oplus R^{(Z)}$.  The $R/I$-module $F'/IF'$ is free and the $R$-module $F'$ is flat, so it follows that $F'$ is a free $R$-module, as we have already proved.  Thus $F$ is a projective $R$-module.
\end{proof}

By analogy with the discussion of ``finitely very flat modules'' in \cite{PS0}, let us define finitely quite flat modules.  A module $F$ over a commutative ring $R$ is \emph{finitely quite flat} if there exists a finite collection of countable multiplicative subsets $S_1$,~\dots, $S_m\subset R$ such that $F$ is a direct summand of an $R$-module filtered by modules isomorphic to $R$, $S_1^{-1}R$,~\dots, $S_m^{-1}R$ (i.e., in other words, $F$ is $\mathbf S^\times$-strongly flat).
Obviously, any finitely quite flat module is quite flat.

The following theorem is our motivation for considering finitely quite flat modules.  It would be interesting to know whether it holds true for quite flat modules instead of finitely quite flat ones.

\begin{thm} \label{finitely-quite-flat-reduction}
Let $R$ be a commutative ring and $I\subset R$ be a T\nobreakdash-nilpotent ideal.  Then a flat $R$-module $F$ is finitely quite flat if and only if the $R/I$-module $F/IF$ is finitely quite flat.
\end{thm}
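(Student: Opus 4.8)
The plan is to reduce both implications to the criterion of Theorem \ref{s-times-strongly-flat}, which characterizes $\mathbf S^\times$-strong flatness of a flat module through projectivity of its base changes to the rings $R_{L,\mathbf s}$, and then to play this against Proposition \ref{projective-reduction}, which detects projectivity of a flat module after reduction modulo a T\nobreakdash-nilpotent ideal. Throughout, for a multiplicative subset $T\subseteq R$ I write $\bar T$ for its image in $R/I$; note that $\bar T$ is again multiplicative, that $\overline{T_L}=(\bar{\mathbf T})_L$ for the composite subsets, and that $R/I\otimes_R(T_L^{-1}R)\cong\bar T_L^{-1}(R/I)$.

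The ``only if'' direction is routine. Suppose $F$ is a direct summand of an $R$-module $G$ filtered by modules isomorphic to $R,\,S_1^{-1}R,\dots,S_m^{-1}R$ for countable $S_1,\dots,S_m\subseteq R$. Since each filtration quotient is a localization of $R$, hence flat, the base change $R/I\otimes_R-$ preserves the filtration (the relevant $\Tor_1^R(-,R/I)$ vanish), so $G/IG$ is filtered by the modules $\bar S_L^{-1}(R/I)$, i.e., it is $\bar{\mathbf S}^\times$-filtered. As $F/IF=R/I\otimes_R F$ is a direct summand of $G/IG$ and the $\bar S_j$ are again countable, $F/IF$ is $\bar{\mathbf S}^\times$-strongly flat, hence finitely quite flat over $R/I$.

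For the ``if'' direction, assume $F/IF$ is $\bar{\mathbf T}^\times$-strongly flat for some countable multiplicative subsets $\bar T_1,\dots,\bar T_m\subseteq R/I$. I first lift these: for each $j$ pick a preimage in $R$ of every element of $\bar T_j$ and let $T_j\subseteq R$ be the (countable) multiplicative subset they generate, so that the image of $T_j$ in $R/I$ is exactly $\bar T_j$. I claim $F$ is $\mathbf T^\times$-strongly flat, which by Theorem \ref{s-times-strongly-flat} amounts to showing that $A\otimes_R F$ is a projective $A$-module, where $A=R_{L,\mathbf s}$, for every $L\subseteq\{1,\dots,m\}$ and every choice $\mathbf s=(s_k)_{k\in K}$, $s_k\in T_k$, $K=\{1,\dots,m\}\setminus L$. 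Fix such $L$ and $\mathbf s$, and set $\bar{\mathbf s}=(\bar s_k)_{k\in K}$. The key computation identifies the reduction of $A$ modulo the ideal $IA$ generated by the image of $I$:
\[ A/IA \;=\; R/I\otimes_R A \;\cong\; \bar T_L^{-1}(R/I)\big/(\bar s_k:k\in K) \;=\; (R/I)_{L,\bar{\mathbf s}}. \]
Base-changing the module correspondingly gives $(A\otimes_R F)/I(A\otimes_R F)\cong (R/I)_{L,\bar{\mathbf s}}\otimes_{R/I}(F/IF)$, which by Theorem \ref{s-times-strongly-flat} applied over $R/I$ to the $\bar{\mathbf T}^\times$-strongly flat module $F/IF$ is projective over $(R/I)_{L,\bar{\mathbf s}}=A/IA$.

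It remains to lift this projectivity from $A/IA$ to $A$ via Proposition \ref{projective-reduction}, and this is where I expect the only real friction. The module $A\otimes_R F$ is flat over $A$ because $F$ is flat over $R$, so the only hypothesis left to verify is that $IA$ is a T\nobreakdash-nilpotent ideal of $A$. This is inherited from $I$ in two steps: under the localization $R\to T_L^{-1}R$ every element of the extension of $I$ has the form $a/t$ with $a\in I$, $t\in T_L$, so a product $(a_1/t_1)\cdots(a_n/t_n)=(a_1\cdots a_n)/(t_1\cdots t_n)$ vanishes as soon as $a_1\cdots a_n=0$; and the image of a T\nobreakdash-nilpotent ideal under the further surjection $T_L^{-1}R\to A$ is visibly T\nobreakdash-nilpotent. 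Hence Proposition \ref{projective-reduction} applies and $A\otimes_R F$ is projective over $A$. As $L$ and $\mathbf s$ were arbitrary, Theorem \ref{s-times-strongly-flat} shows that $F$ is $\mathbf T^\times$-strongly flat, i.e., finitely quite flat. The crux throughout is thus ensuring that passage to the rings $R_{L,\mathbf s}$ simultaneously preserves the T\nobreakdash-nilpotence of the image of $I$ and yields the clean identification $A/IA\cong(R/I)_{L,\bar{\mathbf s}}$, so that the projectivity supplied over $R/I$ can be transported back to $R$.
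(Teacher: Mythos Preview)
Your proof is correct and follows essentially the same approach as the paper's: both lift the countable multiplicative subsets from $R/I$ to $R$, apply Theorem~\ref{s-times-strongly-flat} over $R/I$ to obtain projectivity of $(R/I)_{L,\bar{\mathbf s}}\otimes_{R/I}(F/IF)$, identify this ring as $R_{L,\mathbf s}$ modulo the image of $I$, and then invoke Proposition~\ref{projective-reduction} to lift projectivity before applying Theorem~\ref{s-times-strongly-flat} over $R$. Your treatment is slightly more explicit in justifying the T\nobreakdash-nilpotence of $IA$ under localization and quotient, and you argue the ``only if'' direction directly rather than citing an external lemma, but the architecture is identical.
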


\begin{proof}
For any commutative ring homomorphism $R\to R'$ and any finitely quite flat $R$-module $F$, the $R'$-module $R'\otimes_RF$ is finitely quite flat (cf.~\cite[Lemma 2.2(b)]{PS0}).  Hence the ``only if'' implication is clear.

To prove the ``if'', consider a collection of countable multiplicative subsets $S_1'$,~\dots, $S_m'\subset R/I$ such that the $R/I$-module $F/IF$ is a direct summand of an $R/I$-module filtered by (modules isomorphic to) $R/I$, $S_1^{\prime-1}(R/I)$,~\dots, $S_m^{\prime-1}(R/I)$.  Arguing as in \cite[Lemma 8.4]{PS}, we lift the multiplicative subsets $S_l'\subset R/I$, $1\leq l\leq m$, to countable multiplicative subsets $S_l\subset R$.

Put $R'=R/I$ and $F'=F/IF$.  Denote the collection of multiplicative subsets $S_1'$,~\dots, $S_m'\subseteq R'$ by $\mathbf S'$ and the collection of multiplicative subsets $S_1$,~\dots, $S_m\subseteq R$ by $\mathbf S$. The $R'$-module $F'$ is $\mathbf S'$-strongly flat, hence also $\mathbf S^{\prime\times}$-strongly flat.  By Theorem \ref{s-times-strongly-flat}, it follows that the $R'_{L,\mathbf s'}$-module $F'_{L,\mathbf s'}=R'_{L,\mathbf s'}\otimes_{R'}F'$ is projective for every subset $L\subseteq\{1,\dots,m\}$ and any choice of elements $s_k'\in S_k'$, $k\in K=\{1,\dots,m\}\setminus L$ (where $\mathbf s'$ denotes the collection of chosen elements $(s_k')_{k\in K}$).

Let $s_k\in S_k$, $k\in K$ be some elements and $s_k'\in S_k'$ be their images under the surjective ring homomorphism $R\to R'$. Then there is a natural surjective ring homomorphism $f_{L,\mathbf s}\colon R_{L,\mathbf s}\to R'_{L,\mathbf s'}$ whose kernel is generated by the image of $I$ in $R_{L,\mathbf s}$. Hence the kernel of $f_{L,\mathbf s}$ is a T\nobreakdash-nilpotent ideal.  Further we observe that the $R_{L,\mathbf s}$-module $F_{L,\mathbf s}=R_{L,\mathbf s}\otimes_RF$ is flat (since the $R$-module $F$ is flat), and the $R'_{L,\mathbf s'}$-module $F'_{L,\mathbf s'}$ is isomorphic to $R'_{L,\mathbf s'}\otimes_{R_{L,\mathbf s}}F_{L,\mathbf s}$.

We have seen that the $R'_{L,\mathbf s'}$-module $F'_{L,\mathbf s'}$ is projective. By Proposition \ref{projective-reduction}, it follows that the $R_{L,\mathbf s}$-module $F_{L,\mathbf s}$ is projective.  Applying Theorem \ref{s-times-strongly-flat} again, we conclude that the $R$-module $F$ is $\mathbf S^\times$-strongly flat, hence finitely quite flat.
\end{proof}

\begin{cor} \label{locally-perfect-fqf}
All finitely generated, countably presented flat modules over locally perfect commutative rings are finitely quite flat.
\end{cor}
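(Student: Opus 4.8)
The plan is to combine the structural characterization of locally perfect rings with the reduction theorem just proved. Recall that a commutative ring $R$ is locally perfect if and only if its Jacobson radical $J$ is T-nilpotent and the quotient $R/J$ is von Neumann regular, as stated in the discussion preceding Proposition~\ref{projective-reduction}. This immediately suggests using $I = J$ in Theorem~\ref{finitely-quite-flat-reduction}: since $J$ is a T-nilpotent ideal, a flat $R$-module $F$ is finitely quite flat if and only if the $R/J$-module $F/JF$ is finitely quite flat. Thus the problem reduces to the von Neumann regular quotient $R/J$.

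First I would check that the hypotheses transfer correctly under the reduction. Starting from a finitely generated, countably presented flat $R$-module $F$, I note that $F/JF = (R/J)\otimes_R F$ is again finitely generated (the images of generators of $F$ generate it) and flat over $R/J$; since $R/J$ is von Neumann regular and hence coherent, finitely generated flat modules over it are finitely presented, and in fact every module over a von Neumann regular ring is flat and finitely presented modules are projective. So $F/JF$ is a finitely generated projective $R/J$-module. Being a direct summand of a finitely generated free module $(R/J)^n$, it is trivially $\mathbf S^\times$-strongly flat with the empty collection of multiplicative subsets (one filters $(R/J)^n$ by copies of $R/J$ itself), hence finitely quite flat as an $R/J$-module.

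Then I would invoke Theorem~\ref{finitely-quite-flat-reduction} in the ``if'' direction: since $J$ is T-nilpotent, $F$ is flat over $R$, and $F/JF$ is finitely quite flat over $R/J$, we conclude that $F$ is finitely quite flat over $R$. This completes the argument.

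The only point that requires a little care is verifying that a finitely generated projective module over a von Neumann regular ring is finitely quite flat in the precise sense defined before the statement of the theorem, namely that it is $\mathbf{S}^\times$-strongly flat for some finite collection of countable multiplicative subsets. I expect this to be the main (though still routine) obstacle: one must observe that a finitely generated projective module is a direct summand of a finitely generated free module, and that a finitely generated free module is trivially filtered by copies of the ring $R/J$ itself, so it is $\mathbf{S}^\times$-strongly flat with the empty collection $m = 0$ (or equivalently with $S_1 = \cdots = S_m = \{1\}$). Everything else follows formally from Theorem~\ref{finitely-quite-flat-reduction} and the characterization of locally perfect rings, so no delicate computation is needed.
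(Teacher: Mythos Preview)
Your overall strategy matches the paper's: reduce modulo the T-nilpotent Jacobson radical $J$ via Theorem~\ref{finitely-quite-flat-reduction}, and then handle the von Neumann regular quotient $R/J$. The gap is in the second step, where you claim that $F/JF$ is a finitely generated projective $R/J$-module. The assertion ``since $R/J$ is von Neumann regular and hence coherent, finitely generated flat modules over it are finitely presented'' is false. Coherence only tells you that finitely generated \emph{submodules of finitely presented modules} are finitely presented; it says nothing about arbitrary finitely generated modules. Concretely, if $R$ is von Neumann regular (so $J=0$ and the corollary must already apply to $R$ itself) and $I\subseteq R$ is a countably generated ideal that is not finitely generated, then $R/I$ is cyclic, countably presented, and flat, yet not finitely presented and hence not projective. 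Thus $F/JF$ need not be projective, and your trivial filtration argument does not go through.

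The paper repairs exactly this step. Rather than aiming for projectivity, it observes that $F/JF$ is a finitely generated, countably presented module over the von Neumann regular ring $R/J$, and then invokes the filtration argument from the proof of Theorem~\ref{vNRs-are-CFQ}: with a finite generating set $f_1,\dots,f_n$, the successive quotients $G_i$ are cyclic and countably presented, hence isomorphic to $(R/J)/I_i$ with $I_i$ countably generated; over a von Neumann regular ring such quotients are localizations $S_i^{-1}(R/J)$ at countable multiplicative subsets. This gives a \emph{finite} filtration of $F/JF$ by localizations $S_1^{-1}(R/J),\dots,S_n^{-1}(R/J)$, so $F/JF$ is finitely quite flat (not merely quite flat). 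Then Theorem~\ref{finitely-quite-flat-reduction} finishes the proof exactly as you outlined.
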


\begin{proof}
Let $R$ be a locally perfect commutative ring with the Jacobson radical $J$, and let $F$ be a finitely generated, countably presented flat $R$-module.
Then $F/JF$ is a finitely generated, countably presented module over a von Neumann regular ring $R/J$.
Following the first (or the second) proof of Theorem~\ref{vNRs-are-CFQ}, all finitely generated, countably presented modules over von Neumann regular rings are finitely quite flat.
Thus the $R/J$-module $F/JF$ is finitely quite flat.  Since the ideal $J$ is T-nilpotent, Theorem \ref{finitely-quite-flat-reduction} is applicable, telling that the $R$-module $F$ is finitely quite flat.
\end{proof}

\end{document}